\newtheorem{theorem}{Theorem}[section]
\newtheorem{corollary}[theorem]{Corollary}
\newtheorem{lemma}[theorem]{Lemma}
\newtheorem{example}[theorem]{Example}
\newtheorem{definition}[theorem]{Definition}
\newtheorem{proposition}[theorem]{Proposition}
\newtheorem{remark}[theorem]{Remark}
\newtheorem{comments }[theorem]{Comments}
\numberwithin{theorem}{section}
\begin{document}

\title{A construction of pro-$C^{\ast }$-algebras from pro-$C^{\ast }$%
-correspondences}
\author{Maria Joi\c{t}a and Ioannis Zarakas}
\maketitle

\begin{abstract}
We associate a pro-$C^{\ast }$-algebra to a pro-$C^{\ast }$-correspondence
and show that this construction generalizes the construction of crossed
products by Hilbert pro-$C^{\ast }$-bimodules and the construction of pro-$%
C^{\ast }$-crossed products by strong bounded automorphisms.
\end{abstract}

\begin{center}
\emph{This paper is dedicated to the memory of Anastasios Mallios}
\end{center}

\medskip

\footnotetext{%
Keywords and phrases: pro-$C^{\ast }$-algebra, Hilbert pro-$C^{\ast }$
-bimodule, crossed-product, pro-$C^{*}$-correspondence} \footnotetext{%
Mathematics Subject Classification (2010): 46L05, 46L08, 46H25}

\section{Introduction}

The notion of a Hilbert $C^{\ast }$-module has first been introduced by I.
Kaplansky in 1953. It is a generalization of a Hilbert space, in the sense
that the inner product in a Hilbert $C^{\ast }$-module takes values in a $%
C^{\ast }$-algebra. Since 1953, there has been a continuous development of
the theory of Hilbert $C^{\ast }$-modules which has offered a very rich
literature and useful tools for various important fields of mathematics,
such as KK-theory, $C^{\ast }$-algebraic quantum group theory and groupoid $%
C^{\ast }$-algebras.

A $C^{\ast }$-correspondence is a natural generalization of a Hilbert C$%
^{\ast }$-bimodule. Namely it is a pair $(X,A)$, where $X$ is a right
Hilbert $A$-module together with a left action of $A$ on $X$. In \cite{Pi},
M.\thinspace V.\thinspace Pimsner first showed how to associate a $C^{\ast }$%
-algebra to certain $C^{\ast }$-correspondences, introducing a class of $%
C^{\ast }$-algebras that are now known as Cuntz-Pimsner algebras. It was
later that T.\thinspace Katsura, in his series of papers \cite{K1,K2,K3},
extended the former construction and associated a certain $C^{\ast }$%
-algebra to every $C^{\ast }$-correspondence. Katsura's more general
construction includes a wide range of algebras, amongst them the crossed
product of a $C^{\ast }$-algebra by a Hilbert $C^{\ast }$-bimodule, which
was introduced in \cite{AEE}.

The extension of so rich in results concepts to the case of pro-$C^{\ast }$%
-algebras could not be disregarded. A pro-$C^{\ast }$-algebra $A[\tau
_{\Gamma }]$ is a complete topological $\ast $-algebra for which there
exists a directed family of $C^{\ast }$-seminorms $\Gamma =\{p_{\lambda
}:\lambda \in \Lambda \}$ defining the topology $\tau _{\Gamma }$. In 1988,
N.C. Phillips considered Hilbert modules over pro-$C^{\ast }$-algebras and
studied their structure, in \cite{Ph}. An extensive survey of the theory of
Hilbert modules over pro-$C^{\ast }$-algebras can be found in \cite{J1}. In 
\cite{Z} the notion of a Hilbert pro-$C^{\ast }$-bimodule over a pro-$%
C^{\ast }$-algebra was defined. Subsequently, in \cite{JZ} we defined and
studied the crossed product of a pro-$C^{\ast }$-algebra by a Hilbert pro-$%
C^{\ast }$-bimodule, which is a generalization of crossed products of pro-$%
C^{\ast }$-algebras by inverse limit automorphisms (for the latter see \cite%
{J2}). All the above, gave us the impetus to generalize the important topic
of $C^{\ast }$-correspondences in the setting of pro-$C^{\ast }$-algebras
and to examine under which conditions we can associate a pro-$C^{\ast }$%
-algebra to a pro-$C^{\ast}$-correspondence (for the latter see Definition
3.1).

The paper is organized as follows. In Section 2 we gather some basic facts
on pro-$C^{*}$-algebras and Hilbert pro-$C^{*}$-modules that are needed for
understanding the main results of this paper. Sections 3 and 4 are devoted
in the definition of pro-$C^{*}$-correspondences and representations of them
respectively. In Section 5 we prove that for a certain pro-$C^{*}$%
-correspondence, namely an inverse limit pro-$C^{*}$-correspondence as we
shall call it, a universal pro-$C^{*}$-algebra can be associated to it, and
in Section 6, we see that in case $X$ is a Hilbert pro-$C^{*}$-bimodule over
a pro-$C^{*}$-algebra $A$ the crossed product of $A$ by $X$ is isomorphic to
the pro-$C^{*}$-algebra associated to $X$, when the latter is regarded as a
pro-$C^{*}$-correspondence. Finally, in Section 7, as an application, we
show how the association of a pro-$C^{*}$-algebra to a pro-$C^{*}$%
-correspondence described in Section 5, generalizes the construction of the
crossed product of a pro-$C^{*}$-algebra by a strong bounded automorphism.

\section{Preliminaries}

All vector spaces and algebras we deal with are considered over the field $%
\mathbb{C}$ of complex numbers and all topological spaces are assumed
Hausdorff.

A \textit{pro-}$C^{\ast }$\textit{algebra} $A[\tau _{\Gamma }]$ is a
complete topological $\ast $-algebra for which there exists an upward
directed family $\Gamma $ of $C^{\ast }$-seminorms $\{p_{\lambda
}\}_{\lambda \in \Lambda }$ defining the topology $\tau _{\Gamma }$ (\cite[%
Definition 7.5]{F}). Other terms with which pro-$C^{\ast }$-algebras can be
found in the literature are: locally $C^{\ast }$-algebras (A.\thinspace
Inoue), $b^{\ast }$-algebras (C.\thinspace Apostol) and LMC$^{\ast }$%
-algebras (G.\thinspace Lassner, K.\thinspace Schm{\"{u}}dgen).

For a pro-$C^{\ast }$-algebra $A[\tau _{\Gamma }]$ and for every $\lambda
\in \Lambda $, the quotient normed $\ast $\thinspace -algebra $A_{\lambda
}=A/N_{\lambda }$, where $N_{\lambda }=\{a\in A:\,p_{\lambda }(a)=0\}$, is
already complete, hence a $C^{\ast }$-algebra in the norm $||a+N_{\lambda
}||_{A_{\lambda }}=p_{\lambda }(a),\,a\in A$ (\cite[Theorem 10.24]{F}). The
canonical map from $A$ to $A_{\lambda }$ is denoted by $\pi _{\lambda }^{A}$%
. For $\lambda ,\,\mu \in \Lambda $ with $\lambda \geq \mu $, there is a
canonical surjective $C^{\ast }$-morphism $\pi _{\lambda \mu
}^{A}:A_{\lambda }\rightarrow A_{\mu }$, such that $\pi _{\lambda \mu
}^{A}(a+N_{\lambda })=a+N_{\mu }$ for all $a\in A$. The Arens-Michael
decomposition gives us the representation of $A$ as an inverse limit of $%
C^{\ast }$-algebras, namely $A=\lim\limits_{\leftarrow \lambda }A_{\lambda }$%
, up to a topological $\ast $-isomorphism (\cite[p.\,15-16]{F}). We refer
the reader to \cite{F} for further information about pro-$C^{\ast }$%
-algebras.

Given two pro-$C^{\ast }$-algebras $A[\tau _{\Gamma }]$ and $B[\tau _{\Gamma
^{\prime }}]$, a continuous $\ast $-morphism $\varphi :A\rightarrow B$ is
called a \textit{pro-$C^{\ast }$-morphism}.

Here we recall some basic facts from \cite{J1} and \cite{Z} regarding
Hilbert pro-$C^{*}$-modules and Hilbert pro-$C^{*}$-bimodules, respectively.

Let $A[\tau _{\Gamma }]$ be a pro-$C^{\ast }$-algebra. A \textit{right
Hilbert pro-}$C^{\ast }$\textit{-module} $X$ \textit{over }$A$\textit{\ }(or
just \textit{Hilbert }$A$\textit{-module}), is a linear space $X$ that is
also a right $A$-module equipped with a right $A$-valued inner product $%
\left\langle \cdot ,\cdot \right\rangle _{A}$, that is $\mathbb{C}$- and $A$%
-linear in the second variable and conjugate linear in the first variable,
with the following properties:

\begin{enumerate}
\item $\left\langle x,x\right\rangle _{A}\geq 0,\,\forall\,x\in X,$ and $%
\left\langle x,x\right\rangle _{A}=0$ if and only if $x=0$,

\item $\left( \left\langle x,y\right\rangle _{A}\right) ^{\ast
}=\left\langle y,x\right\rangle _{A},\,\forall\,x,y\in X,$
\end{enumerate}

and which is complete with respect to the topology given by the family of
seminorms $\{p_{\lambda }^{A}\}_{\lambda \in \Lambda },$ with $p_{\lambda
}^{A}\left( x\right) =p_{\lambda }\left( \left\langle x,x\right\rangle
_{A}\right) ^{\frac{1}{2}},x\in X$.

A Hilbert $A$-module is \emph{full} if the pro-$C^{\ast }$- subalgebra of $A$
generated by $\{\left\langle x,y\right\rangle _{A};x,y\in X\}$ coincides
with $A$.

A \textit{left Hilbert pro-}$C^{\ast }$\textit{-module }$X$\ over\ a\ pro-$%
C^{\ast }$-algebra $A[\tau _{\Gamma }]$ is defined in the same way, where
for instance the completeness is requested with respect to the family of
seminorms $\{^{A}p_{\lambda }\}_{\lambda \in \Lambda },$ where $%
^{A}p_{\lambda }\left( x\right) =p_{\lambda }\left( _{A}\left\langle
x,x\right\rangle \right) ^{\frac{1}{2}},x\in X$.

In case $X$ is a left Hilbert \text{pro-}$C^{\ast }$\text{-}module over $%
A[\tau _{\Gamma }]$ and a right Hilbert \text{pro-}$C^{\ast }$\text{-}module
over $B[\tau _{\Gamma ^{\prime }}]$ ($\tau _{\Gamma ^{\prime }}$ is given by
the family of $C^{\ast }$-seminorms $\{q_{\lambda }\}_{\lambda \in \Lambda }$%
), such that the following relations hold:

\begin{itemize}
\item $_{A}\left\langle x,y\right\rangle z=x\left\langle y,z\right\rangle
_{B}$ for all $x,y,z\in X$ ,

\item $q_{\lambda }^{B}(ax)$ $\leq p_{\lambda }(a)q_{\lambda }^{B}\left(
x\right) $ and $^{A}p_{\lambda }(xb)$ $\leq q_{\lambda }(b)^{A}p_{\lambda
}\left( x\right) $, for all $x\in X,\,a\in A,\,b\in B$ and for all $\lambda
\in \Lambda $,
\end{itemize}

then we say that $X$ is \textit{a Hilbert }$A-B$\textit{\ pro-}$C^{\ast }$%
\textit{-bimodule}.

A Hilbert $A-B$ pro-$C^{\ast }$\text{-}bimodule $X$ is \textit{full} if it
is full as a right and as a left Hilbert pro-$C^{*}$-module.

Let $\Lambda $ be an upward directed set and $\{A_{\lambda };B_{\lambda
};X_{\lambda };\pi _{\lambda \mu };\chi _{\lambda \mu };\sigma _{\lambda \mu
};\lambda ,\mu \in \Lambda ,\lambda \geq \mu \}$ an inverse system of
Hilbert $C^{\ast }$-bimodules, that is:

\begin{itemize}
\item $\{A_{\lambda };\pi _{\lambda \mu };\lambda ,\mu \in \Lambda ,\lambda
\geq \mu \}$ and $\{B_{\lambda };\chi _{\lambda \mu };\lambda ,\mu \in
\Lambda ,\lambda \geq \mu \}$ are inverse systems of $C^{\ast }$-algebras;

\item $\{X_{\lambda };\sigma _{\lambda \mu };\lambda ,\mu \in \Lambda
,\lambda \geq \mu \}$ is an inverse system of Banach spaces;

\item for each $\lambda \in \Lambda ,$ $X_{\lambda }$ is a Hilbert $%
A_{\lambda }-B_{\lambda }$ $C^{\ast }$-bimodule;

\item $\left\langle \sigma _{\lambda \mu }\left( x\right) ,\sigma _{\lambda
\mu }\left( y\right) \right\rangle _{B_{\mu }}=\chi _{\lambda \mu }\left(
\left\langle x,y\right\rangle _{B_{\lambda }}\right) $ and $_{A_{\mu
}}\left\langle \sigma _{\lambda \mu }\left( x\right) ,\sigma _{\lambda \mu
}\left( y\right) \right\rangle =\pi _{\lambda \mu }\left( _{A_{\lambda
}}\left\langle x,y\right\rangle \right) $, for all $x,y\in X_{\lambda }$ and
for all $\lambda ,\mu \in \Lambda $ with $\lambda \geq \mu ;$

\item $\sigma_{\lambda\mu}(x)\chi_{\lambda\mu}(b)=\sigma_{\lambda\mu}(xb),\,%
\,\pi_{\lambda\mu}(a)\sigma_{\lambda\mu}(x)=\sigma_{\lambda\mu}(ax)$, for
all $x\in X_{\lambda},\,a\in\,A_{\lambda},\,b\in\,B_{\lambda}$ and for all $%
\lambda,\mu\in\Lambda$ such that $\lambda\geq\mu.$
\end{itemize}

Let $A=${{$\lim\limits_{\leftarrow \lambda }A_{\lambda }$, }}$B=$$%
\lim\limits_{\leftarrow \lambda }B_{\lambda }${\ and }$X=${{$%
\lim\limits_{\leftarrow \lambda }X_{\lambda }$. Then }}$X$ has the structure
of a Hilbert $A-B$ pro-$C^{\ast }$-bimodule with

\begin{center}
$\left( x_{\lambda }\right) _{\lambda \in \Lambda }\left( b_{\lambda
}\right) _{\lambda \in \Lambda }=\left( x_{\lambda }b_{\lambda }\right)
_{\lambda \in \Lambda }$ and $\left\langle \left( x_{\lambda }\right)
_{\lambda \in \Lambda },\left( y_{\lambda }\right) _{\lambda \in \Lambda
}\right\rangle _{B}$ $=\left( \left\langle x_{\lambda },y_{\lambda
}\right\rangle _{B_{\lambda }}\right) _{\lambda \in \Lambda }$

and

$\left( a_{\lambda }\right) _{\lambda \in \Lambda }\left( x_{\lambda
}\right) _{\lambda \in \Lambda }=\left( a_{\lambda }x_{\lambda }\right)
_{\lambda \in \Lambda }$ and $_{A}\left\langle \left( x_{\lambda }\right)
_{\lambda \in \Lambda },\left( y_{\lambda }\right) _{\lambda \in \Lambda
}\right\rangle =\left( _{A_{\lambda }}\left\langle x_{\lambda },y_{\lambda
}\right\rangle \right) _{\lambda \in \Lambda }$,
\end{center}

where $(x_{\lambda})_{\lambda\in\Lambda}\in
X,\,(b_{\lambda})_{\lambda\in\Lambda}\in B$ and $(a_{\lambda})_{\lambda\in%
\Lambda}\in A.$

Let $X$ be a Hilbert $A-B$ pro-$C^{\ast }$-bimodule. Then, for each $\lambda
\in \Lambda ,$ $^{A}p_{\lambda }\left( x\right) =q_{\lambda }^{B}\left(
x\right)$, for all $x\in X$, and the normed space $X_{\lambda }=X/N_{\lambda
}^{B}$, where $N_{\lambda }^{B}=\{x\in X;q_{\lambda }^{B}\left( x\right)
=0\} $, is complete in the norm $||x+N_{\lambda }^{B}||_{X_{\lambda }}$ $%
=q_{\lambda }^{B}(x),x\in X$. Moreover, $X_{\lambda }$ has a canonical
structure of a Hilbert $A_{\lambda }-$ $B_{\lambda }$ $C^{\ast }$-bimodule
with $\left\langle x+N_{\lambda }^{B},y+N_{\lambda }^{B}\right\rangle
_{B_{\lambda }}$ $=\left\langle x,y\right\rangle _{B}+\ker q_{\lambda }$ and 
$_{A_{\lambda }}\left\langle x+N_{\lambda }^{B},y+N_{\lambda
}^{B}\right\rangle =_{A}\left\langle x,y\right\rangle +\ker p_{\lambda }$,
for all $x,y\in X$. The canonical surjection from $X$ on $X_{\lambda }$ is
denoted by $\sigma _{\lambda }^{X}$. For $\lambda ,\mu \in \Lambda $ with $%
\lambda \geq \mu $, there is a canonical surjective linear map $\sigma
_{\lambda \mu }^{X}:X_{\lambda }\rightarrow X_{\mu }$ such that $\sigma
_{\lambda \mu }^{X}\left( x+N_{\lambda }^{B}\right) =x+N_{\mu }^{B}$ for all 
$x\in X$. Then $\{A_{\lambda };B_{\lambda };X_{\lambda };\pi _{\lambda \mu
}^{A};\sigma _{\lambda \mu }^{X};\pi _{\lambda \mu }^{B};\lambda ,\mu \in
\Lambda ,\lambda \geq \mu \}$ is an inverse system of Hilbert $C^{\ast }$%
-bimodules in the above sense.

Let $X$ be a Hilbert pro-$C^{\ast }$-module over $B$. A morphism $%
T:X\rightarrow X$ of right modules is \textit{adjointable} if there is
another morphism of modules $T^{\ast }:X\rightarrow X\ $such that $%
\left\langle Tx_{1},x_{2}\right\rangle _{B}=\left\langle x_{1},T^{\ast
}x_{2}\right\rangle _{B}$ for all $x_{1},x_{2}\in X$. The vector space $%
L_{B}(X)\ $of all adjointable module morphisms from $X$ to $X$ has a
structure of a pro-$C^{\ast }$-algebra under the topology given by the
family of $C^{\ast }$-seminorms $\{q_{\lambda ,L_{B}(X)}\}_{\lambda \in
\Lambda }$, where 
\begin{equation*}
q_{\lambda ,L_{B}(X)}\left( T\right) =\sup \{q_{\lambda }^{B}(Tx):q_{\lambda
}^{B}\left( x\right) \leq 1\},\,\forall\,\lambda\in\Lambda,\,T\in L_{B}(X).
\end{equation*}
Moreover, $\{L_{B_{\lambda }}(X_{\lambda });\pi _{\lambda \mu }^{L_{B}(X)}$; 
$\lambda ,\mu \in \Lambda ,\lambda \geq \mu \}$ where $\pi _{\lambda \mu
}^{L_{B}(X)}:L_{B_{\lambda }}(X_{\lambda })\rightarrow L_{B_{\mu }}(X_{\mu
}) $ is given by $\pi _{\lambda \mu }^{L_{B}(X)}\left( T\right) \left(
\sigma _{\mu }^{X}\left( x\right) \right) =\sigma _{\lambda \mu }^{X}\left(
T(\sigma _{\lambda }^{X}\left( x\right) )\right) $, for all $T\in
L_{B}(X),\,x\in X$, is an inverse system of $C^{\ast }$-algebras and $%
L_{B}(X)=${{$\lim\limits_{\leftarrow \lambda }$}} $L_{B_{\lambda
}}(X_{\lambda })$, up to an isomorphism of pro-$C^{\ast }$-algebras. The
canonical projections $\pi _{\lambda }^{L_{B}(X)}:$ $L_{B}(X)\rightarrow $ $%
L_{B_{\lambda }}(X_{\lambda }),$ $\lambda \in \Lambda $, are given by $\pi
_{\lambda }^{L_{B}(X)}\left( T\right) \left( \sigma _{\lambda }^{X}\left(
x\right) \right) =\sigma _{\lambda }^{X}\left( T(x)\right) $ for all $T\in\L %
_{B}(X)$ and $x\in X$. For $x,y\in X$, the map 
\begin{equation*}
\theta _{y,x}:X\rightarrow X, \,\text{given by}\,\,\, \theta _{y,x}\left(
z\right) =y\left\langle x,z\right\rangle _{B},\,\forall\,x,y,z\in X,
\end{equation*}
is an adjointable module morphism. $\Theta \left( X\right):=$span$\{\theta
_{y,x}:x,y\in X\}$ is a two-sided $\ast $-ideal of $L_{B}(X)$ and its
closure in $L_{B}(X)$ is denoted by $K_{B}(X)$. Moreover, $\left(
K_{B}(X)\right) _{\lambda }=K_{B_{\lambda }}(X_{\lambda })$, for each $%
\lambda \in \Lambda$, with respect to an isomorphism of C$^{*}$-algebras.

Throughout this paper, $A$ and $B$ are pro-$C^{\ast }$-algebras whose
topologies are given by the families of $C^{\ast }$-seminorms $\{p_{\lambda
},\lambda \in \Lambda \}$, respectively $\{q_{\delta },\delta \in \Delta \}.$

\section{Pro-$C^{\ast }$-correspondences}

\begin{definition}
A pro-$C^{\ast }$-correspondence \emph{is a triple $\left( X,A,\varphi
_{X}\right) $, where $A$ is a pro-$C^{\ast }$-algebra, $X$ is a Hilbert pro-$%
C^{\ast }$-module over $A$ and $\varphi _{X}:A\rightarrow L_{A}(X)$ is a pro-%
$C^{\ast }$-morphism.}

\emph{\ A pro-$C^{\ast }$-correspondence $\left( X,A,\varphi _{X}\right) $}
is nondegenerate \emph{\ if $\varphi _{X}$ is nondegenerate (that is, $\left[
\varphi _{X}(A)X\right] =X$, where $[\varphi _{X}(A)X]$ stands for the
closure of the linear span of the set $\{\varphi _{X}(a)x:a\in A,\,x\in X\}$
).}
\end{definition}

\begin{example}
\label{automorphism}\emph{Let $A\ $be a pro-$C^{\ast }$-algebra and $\alpha
:A\rightarrow A$ a nondegenerate pro-$C^{\ast }$-morphism. Consider $\varphi
_{A}:A\rightarrow L_{A}(A)\ $defined by $\varphi _{A}\left( a\right) \left(
b\right) =\alpha \left( a\right) b,\,a,b\in A$. Clearly, $\varphi _{A}$ is a
pro-$C^{\ast }$-morphism and $\left[ \varphi _{A}(A)A\right] =A$. Therefore, 
$\left( A,A,\varphi _{A}\right) $ is a nondegenerate pro-$C^{\ast }$%
-correspondence. If $\alpha =$id$_{A}$, we say that $\left( A,A,\text{id}%
_{A}\right) $ } is the identity pro-$C^{\ast }$-correspondence.
\end{example}

\begin{example}
\emph{Suppose that $X$ is a Hilbert $A-A$ pro-$C^{\ast }$-bimodule. Then the
map $\varphi _{X}:A\rightarrow L_{A}(X)$ defined by $\varphi _{X}\left(
a\right) \left( x\right) =ax$,$\,a\in A,\,x\in X$, is a pro-$C^{\ast }$%
-morphism and since $\left[ AX\right] =X,$ $\left( X,A,\varphi _{X}\right) $
is a nondegenerate pro-$C^{\ast }$-correspondence.}
\end{example}

\begin{example}
\emph{Suppose that $\left( X,A,\varphi _{X}\right) $ and $\left( Y,A,\varphi
_{Y}\right)\ $ are pro-$C^{\ast }$-correspondences. By \cite[pp.77-79]{J1}, $%
X\otimes _{\varphi _{Y}}Y\ $is a Hilbert pro-$C^{\ast }$-module over $A\ $%
and the map $\varphi _{X\otimes _{\varphi _{Y}}Y}:A\rightarrow
L_{A}(X\otimes _{\varphi _{Y}}Y)$ defined by 
\begin{equation*}
\varphi _{X\otimes _{\varphi _{Y}}Y}\left( a\right) \left( x\otimes
_{\varphi _{Y}}y\right) =\varphi _{X}\left( a\right) \left( x\right) \otimes
_{\varphi _{Y}}y,\,a\in A,\,x\in X,\,y\in Y,
\end{equation*}%
is a pro-$C^{\ast }$-morphism \cite[Proposition 4.3.4]{J1}. Then $\left(
X\otimes _{\varphi _{Y}}Y,A,\varphi _{X\otimes _{\varphi _{Y}}Y}\right) $ is
a pro-$C^{\ast }$-correspondence called the tensor product of the pro-$%
C^{\ast }$-correspondences $\left( X,A,\varphi _{X}\right) $ and $\left(
Y,A,\varphi _{Y}\right) $.}
\end{example}

\begin{definition}
\emph{A pro-$C^{\ast }$-correspondence $\left( X,A,\varphi _{X}\right) $} is
an inverse limit pro-$C^{\ast }$-correspondence,\emph{\ if $A$ is an inverse
limit, $\lim\limits_{\leftarrow \lambda }A_{\lambda }$, of $C^{\ast }$%
-algebras in such a way that $X$ is an inverse limit, $\lim\limits_{%
\leftarrow \lambda }X_{\lambda }$, of Hilbert $C^{\ast }$-modules, where $%
X_{\lambda }$ is a Hilbert $A_{\lambda }$-module for each $\lambda $ and $%
\varphi _{X}$ is an inverse limit, $\lim\limits_{\leftarrow \lambda }\varphi
_{X_{\lambda }}$, of $C^{\ast }$-morphisms.}
\end{definition}

\begin{example}
\emph{The identity pro-$C^{\ast }$-correspondence and the Hilbert pro-$%
C^{\ast }$-bimodules are inverse limit pro-$C^{\ast }$-correspondences.}
\end{example}

Throughout this paper an ideal of a pro-$C^{\ast }$-algebra always means a
closed two-sided $\ast $- ideal.\textbf{\ } For a pro-$C^{\ast }$%
-correspondence $\left( X,A,\varphi _{X}\right) $ and an ideal $I$ of $A$,
the following ideals of $A$ are defined (see \cite[Definition 4.1]{K3}): 
\begin{eqnarray*}
X(I) &=&\overline{\text{span}}\{<y,\varphi _{X}(a)x>_{A}\in A:\,a\in
I,\,x,y\in X\}, \\
X^{-1}(I) &=&\{a\in \,A:\,<y,\varphi _{X}(a)x>_{A}\in \,I,\, \forall
\,x,y\in \,X\}\text{.}
\end{eqnarray*}

\begin{lemma}
\label{simple lemma} Let $X$ be a Hilbert $A$-module and $I$ an ideal of $A$%
. We put $XI=$span$\{xa:x\in X,a\in I\}$. Then $x\in XI$ if and only if $%
<y,x>_{A}\in I$, for all $y\in X$.
\end{lemma}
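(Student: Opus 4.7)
The plan is to prove both directions, using only the standard Hilbert-module identities together with an approximate identity of the ideal $I$. The forward implication is essentially formal; the backward implication requires producing an explicit approximating net.

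For the forward direction, suppose first that $x=\sum_{i=1}^n x_i a_i$ with $x_i\in X$ and $a_i\in I$. Then for every $y\in X$,
\begin{equation*}
\langle y,x\rangle_A = \sum_{i=1}^n \langle y,x_i\rangle_A\, a_i \in I,
\end{equation*}
because $I$ is a right ideal. Since $\langle y,\cdot\rangle_A$ is continuous (in each seminorm $p_\lambda$) and $I$ is closed, this extends to any $x$ in the closure of $XI$.

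For the reverse direction, I would exploit an approximate identity of $I$. The closed two-sided $\ast$-ideal $I$ of the pro-$C^{\ast}$-algebra $A$ is itself a pro-$C^{\ast}$-algebra under the induced seminorms, and hence admits an approximate identity $(e_\mu)_{\mu\in M}\subset I$ satisfying $p_\lambda(e_\mu)\le 1$ for all $\lambda\in\Lambda$ and $\mu\in M$, with $p_\lambda(a-ae_\mu)\to 0$ and $p_\lambda(a-e_\mu a)\to 0$ for every $a\in I$ (this can be produced by lifting approximate identities from the quotient $C^\ast$-algebras $A_\lambda$ through the Arens–Michael decomposition). Assuming now that $\langle y,x\rangle_A\in I$ for all $y\in X$, we have in particular $\langle x,x\rangle_A\in I$, and $xe_\mu\in XI$ for every $\mu$. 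I would then expand
\begin{equation*}
\langle x-xe_\mu,\,x-xe_\mu\rangle_A = \langle x,x\rangle_A - e_\mu\langle x,x\rangle_A - \langle x,x\rangle_A e_\mu + e_\mu\langle x,x\rangle_A e_\mu,
\end{equation*}
and apply $p_\lambda$: each of the four differences $\langle x,x\rangle_A - \langle x,x\rangle_A e_\mu$, $\langle x,x\rangle_A - e_\mu\langle x,x\rangle_A$, etc., tends to $0$ in $p_\lambda$ because $\langle x,x\rangle_A\in I$ and $(e_\mu)$ is an approximate identity uniformly bounded in every seminorm. Thus
\begin{equation*}
p_\lambda^A(x-xe_\mu)^2 = p_\lambda\bigl(\langle x-xe_\mu, x-xe_\mu\rangle_A\bigr)\longrightarrow 0
\end{equation*}
for every $\lambda\in\Lambda$, so $xe_\mu\to x$ in the topology of $X$, showing $x\in XI$ (understood in the closed sense, i.e.\ in the closure of the algebraic span).

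The only real obstacle is the first step of the backward direction: guaranteeing the existence of an approximate identity of $I$ with the right uniform boundedness and convergence properties in every defining seminorm. Once that standard pro-$C^{\ast}$-algebra fact is in hand, the remaining argument is the familiar Hilbert-module computation, carried out seminorm by seminorm through the Arens–Michael decomposition.
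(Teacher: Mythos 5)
Your forward direction matches the paper's (which simply calls it ``immediate''), and your backward direction is a correct argument --- but for a weaker statement than the one being claimed. The lemma defines $XI$ as the \emph{algebraic} span $\mathrm{span}\{xa : x\in X,\, a\in I\}$ (note the paper writes $\mathrm{span}$ here, in contrast to $\overline{\mathrm{span}}$ used elsewhere, e.g.\ for $X(I)$), and the very next sentence of the paper uses the lemma to conclude that $XI$ \emph{is closed}. Your approximate-identity argument shows only that $x=\lim_\mu xe_\mu$ lies in the closure of $XI$, as you yourself concede at the end; with that version of the lemma the subsequent deduction of closedness becomes vacuous. The issue is not the existence of a suitable approximate identity --- pro-$C^{\ast}$-algebras do admit approximate identities bounded by $1$ in every defining seminorm, a fact the paper itself invokes from \cite{F} --- but that no approximate-identity argument can, by itself, land you inside the algebraic span.

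The missing idea is a Cohen--Hewitt type factorization. The paper argues: since $\left\langle x,x\right\rangle _{A}\in I$, by \cite[Corollary 1.3.11]{J1} for any real $\alpha$ with $0<\alpha<\tfrac{1}{2}$ there exists $y\in X$ with $x=y\left\langle x,x\right\rangle _{A}^{\alpha }$, and by functional calculus in pro-$C^{\ast}$-algebras the element $\left\langle x,x\right\rangle _{A}^{\alpha }$ again lies in the closed ideal $I$; hence $x$ is literally a single product $ya$ with $a\in I$, so $x\in XI$ in the algebraic sense. Your computation with $\left\langle x-xe_{\mu },x-xe_{\mu }\right\rangle _{A}$ is fine as far as it goes (modulo the cosmetic point that the cross term is $e_{\mu }^{\ast }\left\langle x,x\right\rangle _{A}$, harmless for a self-adjoint approximate identity), but to close the gap you must replace the limiting argument by this factorization step.
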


\begin{proof}
The forward implication is immediate. For the inverse, we have that $%
<x,x>_{A}\in I$, hence from \cite[Corollary 1.3.11]{J1}, if $\alpha $\ is a
real number, $0<\alpha<\frac{1}{2}$, then there exists $y\in X$, such that $%
x=y<x,x>_{A}^{\alpha }$. From functional calculus in pro-$C^{\ast }$%
-algebras (see \cite{F}), we then have that $<x,x>_{A}^{a}\in I$, so $x\in
XI.$\
\end{proof}

\noindent Based on the previous lemma, we get that $XI$\ is a closed
submodule of $X$. In particular if $I=\ker p_{\lambda }$, then by a proof
similar to that of Lemma \ref{simple lemma}, we have that $\ker p_{\lambda
}^{A}=X\ker p_{\lambda }$, so $X/X\ker p_{\lambda }=X_{\lambda }.$

\begin{remark}
\label{inv}\emph{If by $\phi _{I}$\ we denote the $\ast $-morphism $\phi
_{I}:L_{A}(X)\rightarrow L_{A}(X/XI)$ given by $:$%
\begin{equation*}
\phi _{I}(T)(x+XI)=Tx+XI,\,T\in L_{A}(X),\,x\in X,
\end{equation*}%
\ then we get that $X^{-1}(I)=\ker (\phi _{I}\circ \varphi _{X})$. In
particular, if $I=\ker p_{\lambda }$, then $X^{-1}(\ker p_{\lambda })=\ker
(\pi _{\lambda }^{L_{A}(X)}\circ \varphi _{X}).$}
\end{remark}

\begin{lemma}
\label{steplemma1} A pro-$C^{\ast }$-correspondence $\left( X,A,\varphi
_{X}\right) $ is an inverse limit pro-$C^{\ast }$-correspondence if and only
if $X(\ker p_{\lambda })\subset \ker p_{\lambda }\ $, for all $\lambda \in
\Lambda $.
\end{lemma}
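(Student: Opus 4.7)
The plan is to translate the ideal-theoretic condition $X(\ker p_\lambda) \subset \ker p_\lambda$ into a factorization statement for $\varphi_X$ through the canonical quotients, and then assemble/disassemble the inverse limit structure using the Arens-Michael decompositions of $A$, $X$ and $L_A(X)$ that have already been recorded in the preliminaries.

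First I would note the equivalence
\[
X(\ker p_\lambda) \subset \ker p_\lambda \iff \ker p_\lambda \subset X^{-1}(\ker p_\lambda),
\]
which is immediate from unravelling the two definitions at the end of Section~3. By Remark~\ref{inv}, $X^{-1}(\ker p_\lambda) = \ker(\pi_\lambda^{L_A(X)} \circ \varphi_X)$, so the condition of the lemma is equivalent to saying that $\pi_\lambda^{L_A(X)} \circ \varphi_X$ vanishes on $\ker p_\lambda$ for every $\lambda$, i.e., descends to a $C^\ast$-morphism
\[
\varphi_{X_\lambda} : A_\lambda \longrightarrow (L_A(X))_\lambda = L_{A_\lambda}(X_\lambda)
\]
with $\pi_\lambda^{L_A(X)} \circ \varphi_X = \varphi_{X_\lambda} \circ \pi_\lambda^A$, using the identifications $A_\lambda = A/\ker p_\lambda$, $X_\lambda = X/X\ker p_\lambda$ (from the remark following Lemma~\ref{simple lemma}), and $(L_A(X))_\lambda = L_{A_\lambda}(X_\lambda)$ stated in Section~2.

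For the forward implication, if $(X,A,\varphi_X)$ is an inverse limit pro-$C^\ast$-correspondence, then the equality $\varphi_X = \lim_{\leftarrow \lambda}\varphi_{X_\lambda}$ precisely provides such a factorization; applying $\pi_\lambda^A$ to $a \in \ker p_\lambda$ gives $0$, hence $\pi_\lambda^{L_A(X)}(\varphi_X(a)) = 0$, so by the equivalence above $X(\ker p_\lambda) \subset \ker p_\lambda$. For the converse, take $A_\lambda = A/\ker p_\lambda$ and $X_\lambda = X/X\ker p_\lambda$; the hypothesis yields the descended maps $\varphi_{X_\lambda}$. I would then verify that the family $\{\varphi_{X_\lambda}\}$ is compatible with the connecting morphisms $\pi_{\lambda\mu}^A$ and $\pi_{\lambda\mu}^{L_A(X)}$ of the respective inverse systems, i.e.,
\[
\pi_{\lambda\mu}^{L_A(X)} \circ \varphi_{X_\lambda} = \varphi_{X_\mu} \circ \pi_{\lambda\mu}^A,
\]
which follows from the uniqueness of the descent and the compatibility of the Arens-Michael decompositions. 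Taking inverse limits then gives $\varphi_X = \lim_{\leftarrow \lambda}\varphi_{X_\lambda}$, so $(X,A,\varphi_X)$ is an inverse limit pro-$C^\ast$-correspondence.

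The conceptually substantive step is the translation into a factorization, which is essentially packaged into Remark~\ref{inv} and the identification of $X_\lambda$ with $X/X\ker p_\lambda$; the remaining work is a diagram chase checking that the descended $\varphi_{X_\lambda}$ form a genuine inverse system. I do not expect any serious obstacle beyond this bookkeeping, since every identification needed ($A = \lim A_\lambda$, $X = \lim X_\lambda$, $L_A(X) = \lim L_{A_\lambda}(X_\lambda)$) has already been established earlier.
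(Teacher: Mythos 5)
Your proof is correct and follows essentially the same route as the paper's: both directions come down to the observation that $X(\ker p_{\lambda })\subset \ker p_{\lambda }$ is exactly the condition for $\pi _{\lambda }^{L_{A}(X)}\circ \varphi _{X}$ to annihilate $\ker p_{\lambda }$ and hence descend to $C^{\ast }$-morphisms $\varphi _{X_{\lambda }}:A_{\lambda }\rightarrow L_{A_{\lambda }}(X_{\lambda })$ forming an inverse system with limit $\varphi _{X}$. The only cosmetic difference is that you package the well-definedness via Remark \ref{inv} and the already-constructed projections $\pi _{\lambda }^{L_{A}(X)}$, whereas the paper computes directly with inner products in the forward direction and, in the converse, separately verifies that $\ker p_{\lambda }^{A}=X\ker p_{\lambda }$ is invariant under $\varphi _{X}(A)$ before defining $\varphi _{X_{\lambda }}$ on the quotient.
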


\begin{proof}
Suppose that $\left( X,A,\varphi _{X}\right) $ is an inverse limit pro-$%
C^{\ast }$-correspondence. Then $\varphi
_{X}=\lim\limits_{\leftarrow \lambda }\varphi _{X_{\lambda }}$. Let
$<y,\varphi _{X}(a)x>_{A}\in X(\ker p_{\lambda })$, for $x,y\in
X,\,a\in \ker p_{\lambda}$. Then
\begin{eqnarray*}
\pi _{\lambda }^{A}\left( <y,\varphi _{X}(a)x>_{A}\right) &=&<\sigma
_{\lambda }^{X}\left( y\right) ,\pi _{\lambda }^{L_{A}\left( X\right)
}\left( \varphi _{X}(a)\right) \sigma _{\lambda }^{X}\left( x\right)
>_{A_{\lambda }} \\
&=&<\sigma _{\lambda }^{X}\left( y\right) ,\varphi _{X_{\lambda }}(\pi
_{\lambda }^{A}\left( a \right) )\sigma _{\lambda }^{X}\left( x\right)
>_{A_{\lambda }}=0,
\end{eqnarray*}%
and so $<y,\varphi _{X}( a )x>_{A}\in \ker p_{\lambda }.$

Conversely, let $\lambda \in \Lambda $. If $a\in \ker p_{\lambda }$, then $%
<y,\varphi _{X}(a)x>_{A}\in X(\ker p_{\lambda })\subset \ker
p_{\lambda }$, for all $x,y\,\in X$, $\,$whence $\varphi _{X}(a)x\in
\ker p_{\lambda }^{A}$, for all $x\,\in X.$ Also, since $\ker
p_{\lambda }^{A}=X\ker p_{\lambda }$, as noted after Lemma
\ref{simple lemma}, the submodule $\ker
p_{\lambda }^{A}$ of $X$ remains invariant under the action of ${\varphi }${%
$_{X}(A)$.} Therefore, we can consider a linear map $\varphi _{X_{\lambda
}}:A_{\lambda }\rightarrow L_{A_{\lambda }}(X_{\lambda })$ defined by
\begin{equation*}
\,\varphi _{X_{\lambda }}(\pi _{\lambda }^{A}(a))(\sigma _{\lambda }^{X}(x))%
\newline
=\sigma _{\lambda }^{X}(\varphi _{X}(a)x),\,\forall a\in A,\,x\in
X\text{.}
\end{equation*}%
It is easy to check that $\left( \varphi _{X_{\lambda }}\right) _{\lambda }$
is an inverse system of $C^{\ast }$-morphisms, such that $\varphi _{X}={%
\lim\limits_{\leftarrow \lambda }}\varphi _{X_{\lambda }}$, and thus $\left(
X\emph{,}A,\varphi _{X}\right) $ is an inverse limit pro-$C^{\ast }$%
-correspondence.
\end{proof}

\section{Representations of pro-$C^{\ast }$-correspondences}

\begin{definition}
\label{morphism of correspondences} A morphism from a pro-$C^{\ast }$%
-correspondence \emph{$\left( X,A,\varphi _{X}\right) $} to a pro-$C^{\ast }$%
-correspondence \emph{$\left( Y,B,\varphi _{Y}\right) $ is a pair $\left(
\Pi ,T\right) $ consisting of a pro-$C^{\ast }$-morphism $\Pi :A\rightarrow
B $ and a map $T:X\rightarrow Y$ such that the following conditions are met:}

\begin{enumerate}
\item[\emph{(1)}] \emph{\ $\left\langle T(x_{1}),T(x_{2})\right\rangle
_{B}=\Pi \left( \left\langle x_{1},x_{2}\right\rangle _{A}\right) $, for all 
$x_{1},x_{2}\in A;$}

\item[\emph{(2)}] \emph{$\varphi _{Y}\left( \Pi \left( a\right) \right)
T\left( x\right) =T\left( \varphi _{X}\left( a\right) x\right) $, for all $%
a\in A$ and for all $x\in X.$}
\end{enumerate}

\emph{We say that} the morphism \emph{$\left( \Pi ,T\right) $} is
nondegenerate \emph{if $\left[ \Pi \left( A\right) B\right] =B$ and $\left[
T\left( X\right) B\right] =Y.$} \emph{\ }
\end{definition}

\begin{remark}
\emph{Let $\left( \Pi ,T\right) $ be a morphism from a pro-$C^{\ast }$%
-correspondence $\left( X,A,\varphi _{X}\right) $ to a pro-$C^{\ast }$%
-correspondence $\left( Y,B,\varphi _{Y}\right) $. Then:}

\begin{enumerate}
\item[\emph{(1)}] \emph{$T$ is a continuous linear map.}

\item[\emph{(2)}] \emph{$T\left( x\right) \Pi \left( a\right) =T\left(
xa\right) $, for all $a\in A,$ $x\in X.$}
\end{enumerate}
\end{remark}

\begin{proof}
$(1)$ A simple calculation, based on relation (1) of Definition \ref%
{morphism of correspondences}, shows that $T$ is linear.

For each $\delta \in \Delta $, there is $\lambda \in \Lambda $ such that
\begin{equation*}
q_{\delta }^{B}\left( T\left( x\right) \right) ^{2}=q_{\delta }\left( \Pi
\left( \left\langle x,x\right\rangle _{A}\right) \right) \leq p_{\lambda
}\left( \left\langle x,x\right\rangle _{A}\right) =p_{\lambda }^{A}\left(
x\right) ^{2}
\end{equation*}%
for all $x\in X.$

$(2)$ For each $\delta \in \Delta $, we have
\begin{eqnarray*}
&&q_{\delta }^{B}\left( T\left( x\right) \Pi \left( a\right) -T\left(
xa\right) \right) ^{2} \\
&=&q_{\delta }\left( \left\langle T\left( x\right) \Pi \left( a\right)
-T\left( xa\right) ,T\left( x\right) \Pi \left( a\right) -T\left( xa\right)
\right\rangle \right) \\
&=&q_{\delta }\left( \Pi \left( a^{\ast }\left\langle
x,x\right\rangle _{A}a\right) -\Pi \left( a^{\ast }\left\langle
x,xa\right\rangle _{A}\right) -\Pi \left( \left\langle
xa,x\right\rangle _{A}a\right) +\Pi \left( \left\langle
xa,xa\right\rangle _{A}\right) \right) =0,
\end{eqnarray*}%
for all $a\in A,$ $x\in X$.
\end{proof}

For the proof of Lemma \ref{the map psi}, we use the following result from 
\cite{KPW}.

\begin{lemma}
\emph{(\cite[Lemma 2.2]{KPW})} If $A$ is a $C^{\ast }$-algebra and $X$ is a
Hilbert $A$-module, then for $n\in\mathbb{N}$ and $x_{1},\cdots
,x_{n},y_{1},\cdots ,y_{n}\in X$ we get that 
\begin{equation*}
||\sum_{i=1}^{n}\theta _{x_{i},y_{i}}||=||(\left[ <x_{i},x_{j}>_{A}\right]
_{i,j=1}^{n})^{\frac{1}{2}}(\left[ <y_{i},y_{j}>_{A}\right] _{i,j=1}^{n})^{%
\frac{1}{2}}||,
\end{equation*}%
where the norm in the right hand side is the norm in the $C^{\ast }$-algebra 
$M_{n}(A)$, of all $n\times n$ matrices with entries from $A$.
\end{lemma}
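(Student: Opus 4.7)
The plan is to represent $\sum_{i=1}^{n}\theta_{x_{i},y_{i}}$ as the composition of two adjointable operators between $X$ and the standard Hilbert $A$-module $A^{n}$, and then reduce the norm identity to repeated use of the $C^{\ast}$-identity inside $M_{n}(A)=L_{A}(A^{n})$.

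First, I would assign to each $n$-tuple $\bar{x}=(x_{1},\ldots,x_{n})\in X^{n}$ the operator $L_{\bar{x}}\colon A^{n}\to X$ given by $L_{\bar{x}}(a_{1},\ldots,a_{n})=\sum_{i}x_{i}a_{i}$. A routine verification shows that $L_{\bar{x}}$ is adjointable with adjoint $L_{\bar{x}}^{\ast}(z)=(\langle x_{i},z\rangle_{A})_{i=1}^{n}$, and that $L_{\bar{x}}^{\ast}L_{\bar{x}}$, regarded as an element of $L_{A}(A^{n})\cong M_{n}(A)$, is precisely the Gram matrix $[\langle x_{i},x_{j}\rangle_{A}]_{i,j=1}^{n}$. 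The analogous statements hold for $\bar{y}=(y_{1},\ldots,y_{n})$. Then
\begin{equation*}
(L_{\bar{x}}L_{\bar{y}}^{\ast})(z)=\sum_{i}x_{i}\langle y_{i},z\rangle_{A}=\sum_{i}\theta_{x_{i},y_{i}}(z),
\end{equation*}
so $\sum_{i}\theta_{x_{i},y_{i}}=L_{\bar{x}}L_{\bar{y}}^{\ast}$.

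With this identification in hand, set $M=\bigl([\langle x_{i},x_{j}\rangle_{A}]\bigr)^{1/2}$ and $N=\bigl([\langle y_{i},y_{j}\rangle_{A}]\bigr)^{1/2}$, the positive square roots in $M_{n}(A)$. The $C^{\ast}$-identity $\|T\|^{2}=\|T^{\ast}T\|=\|TT^{\ast}\|$ applied several times yields
\begin{align*}
\|L_{\bar{x}}L_{\bar{y}}^{\ast}\|^{2} &=\|L_{\bar{x}}L_{\bar{y}}^{\ast}L_{\bar{y}}L_{\bar{x}}^{\ast}\|=\|L_{\bar{x}}N^{2}L_{\bar{x}}^{\ast}\|=\|(L_{\bar{x}}N)(L_{\bar{x}}N)^{\ast}\|=\|L_{\bar{x}}N\|^{2}\\
&=\|NL_{\bar{x}}^{\ast}L_{\bar{x}}N\|=\|NM^{2}N\|=\|(MN)^{\ast}(MN)\|=\|MN\|^{2},
\end{align*}
where $M^{\ast}=M$ and $N^{\ast}=N$ are used to fold the square roots in and out. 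Taking square roots gives $\|\sum_{i}\theta_{x_{i},y_{i}}\|=\|MN\|$, which is the claimed identity.

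The main obstacle, and really the only non-trivial ingredient, is the initial bookkeeping: one must define $L_{\bar{x}}$ carefully, verify its adjointability, and pin down the identification $L_{A}(A^{n})\cong M_{n}(A)$ so that $L_{\bar{x}}^{\ast}L_{\bar{x}}$ really corresponds to the Gram matrix of inner products. Once this setup is established, the chain of equalities above is purely algebraic, driven entirely by the $C^{\ast}$-identity together with the fact that $M$ and $N$ are self-adjoint.
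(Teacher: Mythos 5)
Your argument is correct. Note, however, that the paper itself offers no proof of this statement: it is quoted verbatim from \cite[Lemma 2.2]{KPW} and used as a black box in the proof of Lemma 4.3, so there is no in-paper argument to compare against. Your factorization $\sum_{i}\theta_{x_{i},y_{i}}=L_{\bar{x}}L_{\bar{y}}^{\ast}$ through the standard module $A^{n}$, the identification of $L_{\bar{x}}^{\ast}L_{\bar{x}}$ with the Gram matrix in $M_{n}(A)\cong K_{A}(A^{n})$, and the chain of $C^{\ast}$-identities folding the positive square roots in and out is exactly the standard route (and essentially the one in the cited source); all the steps, including the use of $\|T\|^{2}=\|T^{\ast}T\|=\|TT^{\ast}\|$ for adjointable maps between the two different modules $A^{n}$ and $X$, are legitimate.
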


\begin{lemma}
\label{the map psi} For a representation $(\Pi ,T)$ from a pro-$C^{\ast }$%
-correspondence $\left( X,A,\varphi _{X}\right) $ to a pro-$C^{\ast }$%
-correspondence $\left( Y,B,\varphi _{Y}\right) $, there is a pro-$C^{\ast }$%
-morphism $\psi _{T}:K_{A}(X)\rightarrow K_{B}(Y)$, such that $\psi
_{T}(\theta _{x,y})=\theta _{T(x),T(y)},$ for all $x,y\in X$.
\end{lemma}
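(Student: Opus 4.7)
The plan is to define $\psi_T$ on the dense $\ast$-subalgebra $\Theta(X)=\mathrm{span}\{\theta_{x,y}:x,y\in X\}$ by
\begin{equation*}
\psi_T\Big(\sum_{i=1}^{n}\theta_{x_i,y_i}\Big)=\sum_{i=1}^{n}\theta_{T(x_i),T(y_i)},
\end{equation*}
and then extend by continuity to $K_A(X)$. Simultaneous well-definedness and continuity will follow from one seminorm estimate driven by the preceding KPW norm formula together with the continuity of $\Pi$.

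Fix $\delta\in\Delta$ and choose $\lambda\in\Lambda$ with $q_\delta\circ\Pi\le p_\lambda$ on $A$; the same inequality holds coordinatewise between $M_n(A_\lambda)$ and $M_n(B_\delta)$ for every $n$. Given $S=\sum_{i=1}^{n}\theta_{x_i,y_i}\in\Theta(X)$, projecting into the $C^{\ast}$-algebra $K_{B_\delta}(Y_\delta)$ and using the KPW formula gives
\begin{equation*}
q_{\delta,K_B(Y)}\big(\psi_T(S)\big)=\big\|G^{1/2}H^{1/2}\big\|_{M_n(B_\delta)},
\end{equation*}
where $G=[\langle T(x_i),T(x_j)\rangle_B]_{i,j}$ and $H=[\langle T(y_i),T(y_j)\rangle_B]_{i,j}$, viewed in $M_n(B_\delta)$. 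By condition~(1) of Definition~\ref{morphism of correspondences}, $G$ and $H$ are the componentwise images under $\Pi$ of the Gram matrices $G_X,G_Y$ of $(x_i),(y_i)$ on the $X$-side; since $\ast$-morphisms preserve continuous functional calculus on positive elements, $G^{1/2}$ and $H^{1/2}$ are the componentwise images of $G_X^{1/2}$ and $G_Y^{1/2}$. Combining the coordinatewise estimate with one more application of the KPW formula on the $X$-side yields
\begin{equation*}
q_{\delta,K_B(Y)}\big(\psi_T(S)\big)\le\big\|G_X^{1/2}G_Y^{1/2}\big\|_{M_n(A_\lambda)}=p_{\lambda,K_A(X)}(S).
\end{equation*}
Taking $S=0$ yields well-definedness of $\psi_T$ on $\Theta(X)$, and the same inequality gives continuity with respect to the defining families of $C^{\ast}$-seminorms; hence $\psi_T$ extends uniquely to a continuous linear map $K_A(X)\to K_B(Y)$.

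That the extension is a $\ast$-morphism reduces by continuity to verification on $\Theta(X)$. Involutivity is immediate from $\theta_{x,y}^{\ast}=\theta_{y,x}$. Multiplicativity uses $\theta_{x,y}\theta_{u,v}=\theta_{x\langle y,u\rangle_A,\,v}$, together with the already-established identity $T(xa)=T(x)\Pi(a)$ and condition~(1), to give
\begin{equation*}
\psi_T(\theta_{x,y}\theta_{u,v})=\theta_{T(x\langle y,u\rangle_A),\,T(v)}=\theta_{T(x)\langle T(y),T(u)\rangle_B,\,T(v)}=\psi_T(\theta_{x,y})\psi_T(\theta_{u,v}).
\end{equation*}

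The main obstacle is exactly the well-definedness step: $\Theta(X)$ has no canonical presentation, so one needs to know that \emph{every} representation $S=\sum\theta_{x_i,y_i}$ of a given operator produces the same image on the $Y$-side. The KPW norm formula is what makes this tractable, since it expresses the $C^{\ast}$-seminorm of $S$ via the Gram matrices in $M_n(A_\lambda)$ and thus permits the continuity of $\Pi$ to transfer estimates from $X$ to $Y$ in one stroke.
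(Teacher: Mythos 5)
Your proposal is correct and follows essentially the same route as the paper: both define $\psi_T$ on $\Theta(X)$ and establish the key estimate $q_{\delta,L_B(Y)}(\psi_T(S))\leq p_{\lambda,L_A(X)}(S)$ by combining the KPW norm formula with the factorization of $\pi_\delta^B\circ\Pi$ through a $C^{\ast}$-morphism $\Pi_\delta:A_\lambda\rightarrow B_\delta$, which preserves square roots and is contractive on matrix algebras. You are merely more explicit than the paper about well-definedness and the $\ast$-morphism verification, which the paper subsumes under ``it suffices to show continuity.''
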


\begin{proof}
It suffices to show that $\psi _{T}|_{\Theta (X)}$\ is continuous. Since $%
\Pi $ is continuous, for each $\delta \in \Delta $, there is
$\lambda \in \Lambda $, such that $q_{\delta }\left( \Pi \left(
a\right) \right) \leq
p_{\lambda }(a)$, for all $a\in A$, and so there is a $C^{\ast }$-morphism $%
\Pi _{\delta }:A_{\lambda }\rightarrow B_{\delta } $ such that $\pi
_{\delta }^{B}\circ \Pi =\Pi _{\delta }\circ \pi _{\lambda }^{A}$.
Then for each $ \delta \in \Delta $, we have
\begin{eqnarray*}
&&q_{\delta ,L_{B}(Y)}(\psi _{T}(\sum_{j=1}^{n}\theta _{x_{j},y_{j}})) \\
&=&q_{\delta ,L_{B}(Y)}(\sum_{j=1}^{n}\theta
_{T(x_{j}),T(y_{j})})=||\sum_{j=1}^{n}\theta _{\sigma _{\delta }^{Y}\left(
T(x_{j})\right) ,\sigma _{\delta }^{Y}\left( T(y_{j})\right) }|| \\
&=&||(\left[ \pi _{\delta }^{B}(\left\langle T(x_{i}),T(x_{j})\right\rangle
_{B})\right] _{i,j=1}^{n})^{\frac{1}{2}}(\left[ \pi _{\delta
}^{B}(\left\langle T(y_{i}),T(y_{j})\right\rangle _{B})\right]
_{i,j=1}^{n})^{\frac{1}{2}}|| \\
&=&||(\left[ \pi _{\delta }^{B}\circ \Pi (\left\langle
x_{i},x_{j}\right\rangle _{A})\right] _{i,j=1}^{n})^{\frac{1}{2}}(\left[ \pi
_{\delta }^{B}\circ \Pi (\left\langle y_{i},y_{j}\right\rangle _{A})\right]
_{i,j=1}^{n})^{\frac{1}{2}}|| \\
&=&||(\left[ \Pi _{\delta }\circ \pi _{\lambda }^{A}(\left\langle
x_{i},x_{j}\right\rangle _{A})\right] _{i,j=1}^{n})^{\frac{1}{2}}(\left[ \Pi
_{\delta }\circ \pi _{\lambda }^{A}(\left\langle y_{i},y_{j}\right\rangle
_{A})\right] _{i,j=1}^{n})^{\frac{1}{2}}|| \\
&=&||(\left[ \Pi _{\delta }(\left\langle \sigma _{\lambda }^{X}\left(
x_{i}\right) ,\sigma _{\lambda }^{X}\left( x_{j}\right) \right\rangle _{A})%
\right] _{i,j=1}^{n})^{\frac{1}{2}}(\left[ \Pi _{\delta }(\left\langle
\sigma _{\lambda }^{X}\left( y_{i}\right) ,\sigma _{\lambda }^{X}\left(
y_{j}\right) \right\rangle _{A})\right] _{i,j=1}^{n})^{\frac{1}{2}}|| \\
&\leq &||(\left[ \left\langle \sigma _{\lambda }^{X}\left( x_{i}\right)
,\sigma _{\lambda }^{X}\left( x_{j}\right) \right\rangle \right]
_{i,j=1}^{n})^{\frac{1}{2}}(\left[ \left\langle \sigma _{\lambda }^{X}\left(
y_{i}\right) ,\sigma _{\lambda }^{X}\left( y_{j}\right) \right\rangle \right]
_{i,j=1}^{n})^{\frac{1}{2}}|| \\
&=&||\sum_{j=1}^{n}\theta _{\sigma _{\lambda }^{X}\left(
x_{j}\right) ,\sigma _{\lambda }^{X}\left( y_{j}\right)
}||=p_{\lambda ,L_{A}\left( X\right) }\left( \sum_{j=1}^{n}\theta
_{x_{j},y_{j}}\right),
\end{eqnarray*}%
for all $x_{1},...,x_{n},y_{1},...,y_{n}\in X,\,n\in\mathbb{N}.$
\end{proof}

\noindent Let $\left( X,A,\varphi _{X}\right) $ be a pro-$C^{\ast }$%
-correspondence. For each $\lambda \in \Lambda $, we define the ideals%
\newline
\begin{equation*}
J_{X}^{\lambda }=\{a\in A:\pi _{\lambda }^{L_{A}(X)}\left( \varphi
_{X}\left( a\right) \right) \in K_{A_{\lambda }}(X_{\lambda })\text{ and }%
\pi _{\lambda }^{A}\left( ab\right) =0,\, \forall \,b\in \ker (\pi _{\lambda
}^{L_{A}(X)}\circ \varphi _{X})\}
\end{equation*}%
and 
\begin{equation*}
\mathcal{J}_{X}=\bigcap_{\lambda }J_{X}^{\lambda }.
\end{equation*}

\begin{remark}
\emph{For a $C^{\ast }$-correspondence $\left( X,A,\varphi _{X}\right) ,$ $%
J_{X}=\varphi _{X}^{\emph{-}1}(K_{A}(X))\cap (\ker \varphi _{X})^{\bot }$ 
\cite[Definition 3.3]{K3} is the largest ideal to which the restriction of $%
\varphi _{X}$ is an injection into $K_{A}(X)$. If $\left( X,A,\varphi
_{X}\right) $ is a $C^{\ast }$-correspondence, then}%
\begin{eqnarray*}
\mathcal{J}_{X} &=&\{a\in A:\varphi _{X}\left( a\right) \in K_{A}(X)\text{ 
\emph{and} }ab=0,\,\forall\, b\in \ker \varphi _{X}\} \\
&=&\varphi _{X}^{-1}(K_{A}(X))\cap (\ker \varphi _{X})^{\bot }=J_{X}.
\end{eqnarray*}
\end{remark}

\begin{lemma}
\label{ideal}Let $\left( X,A,\varphi _{X}\right) $ be an inverse limit pro-$%
C^{*}$-correspondence. Then $\pi _{\lambda }^{A}\left( J_{X}^{\lambda
}\right) =J_{X_{\lambda }}$ for all $\lambda \in \Lambda .$
\end{lemma}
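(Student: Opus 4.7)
My plan is to reduce the identity $\pi_\lambda^A(J_X^\lambda) = J_{X_\lambda}$ to the sharper statement $J_X^\lambda = (\pi_\lambda^A)^{-1}(J_{X_\lambda})$ and then invoke surjectivity of $\pi_\lambda^A$. The whole argument hinges on the single compatibility relation coming from the hypothesis that $(X,A,\varphi_X)$ is an inverse limit pro-$C^{\ast}$-correspondence, namely
\begin{equation*}
\pi_\lambda^{L_A(X)}\circ \varphi_X \;=\; \varphi_{X_\lambda}\circ \pi_\lambda^A,
\end{equation*}
which is exactly what $\varphi_X = \varprojlim \varphi_{X_\lambda}$ means (it is the defining relation used in the proof of Lemma \ref{steplemma1}). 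Recall also that $J_{X_\lambda}$, as defined for the $C^{\ast}$-correspondence $(X_\lambda, A_\lambda, \varphi_{X_\lambda})$, equals $\varphi_{X_\lambda}^{-1}(K_{A_\lambda}(X_\lambda)) \cap (\ker \varphi_{X_\lambda})^{\bot}$.

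First I would rewrite the two clauses in the definition of $J_X^\lambda$ using the compatibility relation. The condition $\pi_\lambda^{L_A(X)}(\varphi_X(a)) \in K_{A_\lambda}(X_\lambda)$ is literally $\varphi_{X_\lambda}(\pi_\lambda^A(a)) \in K_{A_\lambda}(X_\lambda)$, i.e.\ $\pi_\lambda^A(a) \in \varphi_{X_\lambda}^{-1}(K_{A_\lambda}(X_\lambda))$. For the second clause, observe that $b \in \ker(\pi_\lambda^{L_A(X)} \circ \varphi_X)$ iff $\varphi_{X_\lambda}(\pi_\lambda^A(b))=0$, so by surjectivity of $\pi_\lambda^A$ one has
\begin{equation*}
\{\pi_\lambda^A(b) : b \in \ker(\pi_\lambda^{L_A(X)}\circ\varphi_X)\} \;=\; \ker \varphi_{X_\lambda}.
\end{equation*}
Therefore the requirement that $\pi_\lambda^A(ab)=0$ for every such $b$ is equivalent to $\pi_\lambda^A(a)\,c = 0$ for every $c \in \ker \varphi_{X_\lambda}$, i.e.\ $\pi_\lambda^A(a) \in (\ker \varphi_{X_\lambda})^{\bot}$. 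Combining the two reformulations yields $a \in J_X^\lambda \iff \pi_\lambda^A(a) \in J_{X_\lambda}$, that is, $J_X^\lambda = (\pi_\lambda^A)^{-1}(J_{X_\lambda})$.

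Finally, since $\pi_\lambda^A : A \to A_\lambda$ is surjective, applying it to both sides gives $\pi_\lambda^A(J_X^\lambda) = J_{X_\lambda}$, as desired. The only place where care is needed is the passage from ``$\pi_\lambda^A(ab)=0$ for every $b \in \ker(\pi_\lambda^{L_A(X)}\circ\varphi_X)$'' to ``$\pi_\lambda^A(a) \in (\ker \varphi_{X_\lambda})^{\bot}$''; this is where surjectivity of $\pi_\lambda^A$ is essential to identify the set of relevant $\pi_\lambda^A(b)$ with all of $\ker \varphi_{X_\lambda}$, and is the one subtlety in what is otherwise a direct unpacking of the definitions.
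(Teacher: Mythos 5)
Your proposal is correct and follows essentially the same route as the paper: both arguments rest on the compatibility relation $\pi_\lambda^{L_A(X)}\circ\varphi_X=\varphi_{X_\lambda}\circ\pi_\lambda^A$ and on surjectivity of $\pi_\lambda^A$ to identify $\{\pi_\lambda^A(b):b\in\ker(\pi_\lambda^{L_A(X)}\circ\varphi_X)\}$ with $\ker\varphi_{X_\lambda}$. Your intermediate reformulation $J_X^\lambda=(\pi_\lambda^A)^{-1}(J_{X_\lambda})$ is only a slightly more explicit packaging of the chain of set equalities the paper writes directly.
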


\begin{proof}
If $\left( X,A,\varphi _{X}\right) $ is an inverse limit
correspondence, then $\varphi _{X}=$ $\lim\limits_{\leftarrow
\lambda }\varphi _{X_{\lambda }}$ and $\pi _{\lambda }^{L_{A}\left(
X\right) }\circ \varphi _{X}=\varphi _{X_{\lambda }}\circ \pi
_{\lambda }^{A}$, for all $\lambda \in \Lambda .$
Therefore,%
\begin{eqnarray*}
&&\pi _{\lambda }^{A}\left( J_{X}^{\lambda }\right) \\
&=&\{\pi _{\lambda }^{A}\left( a\right) \in A_{\lambda }:\varphi
_{X_{\lambda }}\left( \pi _{\lambda }^{A}\left( a\right) \right) \in
K_{A_{\lambda }}(X_{\lambda }),\text{ }\pi _{\lambda }^{A}\left(
a\right) \pi _{\lambda }^{A}\left( b\right) =0,\,\forall \,b\in \ker
(\varphi
_{X_{\lambda }}\circ \pi _{\lambda }^{A})\} \\
&=&\{\pi _{\lambda }^{A}\left( a\right) \in A_{\lambda }:\varphi
_{X_{\lambda }}\left( \pi _{\lambda }^{A}\left( a\right) \right) \in
K_{A_{\lambda }}(X_{\lambda }),\text{ }\pi _{\lambda }^{A}\left(
a\right) \pi _{\lambda }^{A}\left( b\right) =0,\,\forall\, \pi
_{\lambda
}^{A}\left( b\right) \in \ker \varphi _{X_{\lambda }}\} \\
&=&J_{X_{\lambda }}
\end{eqnarray*}%
for all $\lambda \in \Lambda .$
\end{proof}

\begin{definition}

\begin{enumerate}
\item[\emph{(1)}] A representation of a pro-$C^{\ast }$-correspondence \emph{%
$\left( X,A,\varphi _{X}\right) $} \ on a pro-$C^{\ast }$-algebra $B$\ \emph{%
is a morphism $(\pi ,t)$ from $\left( X,A,\varphi _{X}\right) $ to the
identity correspondence $\left( B,B,\text{id}_{B}\right) .$}

\item[\emph{(2)}] A covariant representation of a pro-$C^{\ast }$%
-correspondence \emph{$\left( X,A,\varphi _{X}\right) $} on a pro-$C^{\ast }$%
-algebra $B$ \emph{is a representation $\left( \pi ,t\right) $ with the
property that $\psi _{t}\left( \varphi _{X}\left( a\right) \right) =\pi
\left( a\right) $, for all $a\in \mathcal{J}_{X}.$}
\end{enumerate}
\end{definition}

Remark that in case $(\pi ,t)$ is a morphism of a pro-$C^{\ast }$%
-correspondence $\left( X,A,\varphi _{X}\right) $ on a pro-$C^{\ast }$%
-algebra $B$, then the map $\psi _{t}:K_{A}(X)\rightarrow B$ of Lemma \ref%
{the map psi} is given by $\psi _{t}(\theta _{x,y})=t(x)t(y)^{\ast },$ for $%
x,y\in X$. This is a consequence of Proposition \ref{generalresult} below
and the fact that every pro-$C^{\ast }$-algebra has an approximate identity
(see \cite{F}).

\section{Pro-$C^{\ast }$-algebras associated to pro-$C^{\ast }$%
-correspon\-dences}

For a representation $\left( \pi ,t\right) $ of a pro-$C^{\ast }$%
-correspondence $\left( X\emph{,}A\emph{,}\,\varphi _{X}\right) $ on a pro-$%
C^{\ast }$-algebra $B$, we denote by pro-$C^{\ast }$-$(\pi (A),t(X))$ the
pro-$C^{\ast }$-subalgebra of $B$ generated by the images of $\pi $ and $t$.

\begin{definition}
\label{universal} \emph{For a pro-$C^{\ast }$-correspondence $\left( X\emph{,%
}A\emph{,}\varphi _{X}\right) $,} the pro-$C^{\ast }$-algebra $\mathcal{O}%
_{X}$ is defined to be the pro-$C^{\ast }$-algebra\emph{\ pro-$C^{\ast }$-$%
(\pi _{X}(A),t_{X}(X))$, where $(\pi _{X},t_{X})$ is a universal covariant
representation of $X$, in the sense that for every covariant representation $%
(\pi ,t)$ of $X$ on a pro-$C^{\ast }$-algebra $B$, there is a unique pro-$%
C^{\ast }$-morphism $\Phi :\mathcal{O}_{X}\longrightarrow B$, such that $%
\Phi \circ \pi _{X}=\pi ,\,\Phi \circ t_{X}=t.$}
\end{definition}

\begin{remark}
\label{uniqueness}

\begin{enumerate}
\item[\emph{(1)}] \emph{If $\left( X\emph{,}A\emph{,}\varphi _{X}\right) $
is a $C^{\ast }$-correspondence, then } \emph{$\mathcal{O}_{X}$ is the $%
C^{\ast }$-algebra associated to it \cite[Definition 2.6]{K1}.}

\item[\emph{(2)}] \emph{Let $\left( X\emph{,}A\emph{,}\varphi _{X}\right) $
be a pro-$C^{\ast }$-correspondence. If the pro-$C^{\ast }$-algebra $%
\mathcal{O}_{X}$ exists, it is unique, up to a pro-$C^{\ast }$-isomorphism.}
\end{enumerate}
\end{remark}

\begin{lemma}
\label{steplemma} Let $\left( X\emph{,}A,\varphi _{X}\right) $ be an inverse
limit pro-$C^{\ast }$-correspondence with the property that $\pi _{\lambda
\mu }^{A}\left( J_{X_{\lambda }}\right) \subset J_{X_{\mu }}~$, for all $%
\lambda ,\mu \in \Lambda $ with $\lambda \geq \mu $. Then for each $\lambda
,\mu \in \Lambda $ with $\lambda \geq \mu $, there is a $C^{\ast }$-morphism 
$\rho _{\lambda \mu }:$ $\mathcal{O}_{X_{\lambda }}$ $\rightarrow \mathcal{O}%
_{X_{\mu }}$ such that $\rho _{\lambda \mu }\circ t_{X_{\lambda }}=t_{X_{\mu
}}\circ \sigma _{\lambda \mu }^{X}$ and $\rho _{\lambda \mu }\circ \pi
_{X_{\lambda }}=\pi _{X_{\mu }}\circ \pi _{\lambda \mu }^{A},$ where $(\pi
_{X_{\lambda }},t_{X_{\lambda }})$ is the universal covariant representation
of Definition \ref{universal}. Moreover, $\{\mathcal{O}_{X_{\lambda }};\rho
_{\lambda \mu };\lambda ,\mu \in \Lambda ,\lambda \geq \mu \}$ is an inverse
system of $C^{\ast }$-algebras.
\end{lemma}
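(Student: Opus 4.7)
The plan is to invoke the universal property of $\mathcal{O}_{X_{\lambda }}$ (in the $C^{\ast }$-algebra sense, see Remark \ref{uniqueness}(1)) applied to a carefully chosen covariant representation of $\left( X_{\lambda },A_{\lambda },\varphi _{X_{\lambda }}\right) $ on $\mathcal{O}_{X_{\mu }}$, and to read off $\rho _{\lambda \mu }$ as the factoring map it induces. For $\lambda \geq \mu $ I put
$$\pi ':=\pi _{X_{\mu }}\circ \pi _{\lambda \mu }^{A}\colon A_{\lambda }\longrightarrow \mathcal{O}_{X_{\mu }},\qquad t':=t_{X_{\mu }}\circ \sigma _{\lambda \mu }^{X}\colon X_{\lambda }\longrightarrow \mathcal{O}_{X_{\mu }}.$$

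The next step is to check that $(\pi ',t')$ is a representation of $\left( X_{\lambda },A_{\lambda },\varphi _{X_{\lambda }}\right) $ in the sense of Definition \ref{morphism of correspondences}. Axiom (1) follows from the inverse-system relation $\left\langle \sigma _{\lambda \mu }^{X}(x_{1}),\sigma _{\lambda \mu }^{X}(x_{2})\right\rangle _{A_{\mu }}=\pi _{\lambda \mu }^{A}\left( \left\langle x_{1},x_{2}\right\rangle _{A_{\lambda }}\right) $ combined with axiom (1) for $(\pi _{X_{\mu }},t_{X_{\mu }})$; axiom (2) follows from the intertwining $\varphi _{X_{\mu }}\circ \pi _{\lambda \mu }^{A}=\pi _{\lambda \mu }^{L_{A}(X)}\circ \varphi _{X_{\lambda }}$, which is the inverse-limit presentation of $\varphi _{X}$ guaranteed by Lemma \ref{steplemma1}.

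The main step is covariance of $(\pi ',t')$. Using the description $\psi _{t}(\theta _{x,y})=t(x)t(y)^{\ast }$ recorded immediately before Section 5, one checks that $\psi _{t'}$ and $\psi _{t_{X_{\mu }}}\circ \pi _{\lambda \mu }^{L_{A}(X)}\big|_{K_{A_{\lambda }}(X_{\lambda })}$ agree on the dense $\ast $-ideal $\Theta (X_{\lambda })$, so, being continuous $\ast $-morphisms by Lemma \ref{the map psi}, they coincide on $K_{A_{\lambda }}(X_{\lambda })$. Consequently, for $a\in J_{X_{\lambda }}$,
$$\psi _{t'}\bigl(\varphi _{X_{\lambda }}(a)\bigr)=\psi _{t_{X_{\mu }}}\bigl(\varphi _{X_{\mu }}(\pi _{\lambda \mu }^{A}(a))\bigr).$$
At this point the standing hypothesis $\pi _{\lambda \mu }^{A}(J_{X_{\lambda }})\subset J_{X_{\mu }}$ enters decisively: it places $\pi _{\lambda \mu }^{A}(a)$ in $J_{X_{\mu }}$, so covariance of the universal representation $(\pi _{X_{\mu }},t_{X_{\mu }})$ on $\mathcal{O}_{X_{\mu }}$ yields $\psi _{t_{X_{\mu }}}(\varphi _{X_{\mu }}(\pi _{\lambda \mu }^{A}(a)))=\pi _{X_{\mu }}(\pi _{\lambda \mu }^{A}(a))=\pi '(a)$. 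I expect this covariance verification, and in particular aligning $\psi _{t'}$ with $\psi _{t_{X_{\mu }}}$, to be the main obstacle; the ideal hypothesis in the statement is imposed precisely so that this step can be carried out.

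With $(\pi ',t')$ now established as a covariant representation, the universal property of $\mathcal{O}_{X_{\lambda }}$ supplies the required $C^{\ast }$-morphism $\rho _{\lambda \mu }\colon \mathcal{O}_{X_{\lambda }}\to \mathcal{O}_{X_{\mu }}$, uniquely determined by $\rho _{\lambda \mu }\circ \pi _{X_{\lambda }}=\pi _{X_{\mu }}\circ \pi _{\lambda \mu }^{A}$ and $\rho _{\lambda \mu }\circ t_{X_{\lambda }}=t_{X_{\mu }}\circ \sigma _{\lambda \mu }^{X}$. Finally, the inverse-system identities $\rho _{\lambda \lambda }=\mathrm{id}_{\mathcal{O}_{X_{\lambda }}}$ and $\rho _{\mu \nu }\circ \rho _{\lambda \mu }=\rho _{\lambda \nu }$ for $\lambda \geq \mu \geq \nu $ follow at once from the uniqueness clause in the universal property, since both sides agree on the generating sets $\pi _{X_{\lambda }}(A_{\lambda })$ and $t_{X_{\lambda }}(X_{\lambda })$ of $\mathcal{O}_{X_{\lambda }}$.
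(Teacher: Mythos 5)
Your proposal is correct and follows essentially the same route as the paper: form the composite pair $(\pi _{X_{\mu }}\circ \pi _{\lambda \mu }^{A},\,t_{X_{\mu }}\circ \sigma _{\lambda \mu }^{X})$, verify it is a representation, establish $\psi _{t'}=\psi _{t_{X_{\mu }}}\circ \pi _{\lambda \mu }^{L_{A}(X)}|_{K_{A_{\lambda }}(X_{\lambda })}$ by checking on the dense span $\Theta (X_{\lambda })$, use the hypothesis $\pi _{\lambda \mu }^{A}(J_{X_{\lambda }})\subset J_{X_{\mu }}$ to get covariance, and then invoke universality of $(\pi _{X_{\lambda }},t_{X_{\lambda }})$ together with its uniqueness clause for the inverse-system identities. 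The only difference is that you spell out a few steps the paper leaves as ``easy to check.''
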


\begin{proof}
We easily get that for all $\lambda \geq \mu ,$ the pair $(\pi _{X_{\mu
}}\circ \pi _{\lambda \mu }^{A},t_{X_{\mu }}\circ \sigma _{\lambda \mu
}^{X}) $ is a representation of the $C^{\ast }$-correspondence $\left(
X_{\lambda },A_{\lambda },\varphi _{X_{\lambda }}\right) $ on the $C^{\ast }$%
-algebra $\mathcal{O}_{X_{\mu }}$. We will show that this representation is
also a covariant representation. From
\begin{eqnarray*}
\psi _{t_{X_{\mu }}\circ \sigma _{\lambda \mu }^{X}}(\theta _{\sigma
_{\lambda }^{X}(x),\sigma _{\lambda }^{X}(y)}) &=&t_{X_{\mu }}\circ \sigma
_{\lambda \mu }^{X}\left( \sigma _{\lambda }^{X}(x)\right) \left( t_{X_{\mu
}}\circ \sigma _{\lambda \mu }^{X}\left( \sigma _{\lambda }^{X}(y)\right)
\right) ^{\ast } \\
&=&t_{X_{\mu }}\left( \sigma _{\mu }^{X}(x)\right) t_{X_{\mu }}\left( \sigma
_{\mu }^{X}(y)\right) ^{\ast } \\
&=&\psi _{t_{X_{\mu }}}(\theta _{\sigma _{\mu }^{X}(x),\sigma _{\mu
}^{X}(y)})=\psi _{t_{X_{\mu }}}\left( \pi _{\lambda \mu
}^{L_{A}(X)}\left( \theta _{\sigma _{\lambda }^{X}(x),\sigma
_{\lambda }^{X}(y)}\right) \right),
\end{eqnarray*}%
for all $x,y\in X$, and taking into account that for all $\lambda\in\Lambda$%
, $\Theta \left( X_{\lambda }\right) $ is dense in $K_{A_{\lambda
}}(X_{\lambda })$, we deduce that $\psi _{t_{X_{\mu }}\circ \sigma _{\lambda
\mu }^{X}}=\psi _{t_{X_{\mu }}}\circ \pi _{\lambda \mu
}^{L_{A}(X)}|_{K_{A_{\lambda }}(X_{\lambda })}.$

Let $\pi _{\lambda }^{A}\left( a\right) \in J_{X_{\lambda }}$, $a\in
A$. Since $\pi _{\mu }^{A}\left( a\right) =\pi _{\lambda \mu
}^{A}\left( \pi _{\lambda }^{A}\left( a\right) \right) \in J_{X_{\mu
}}$, we have
\begin{eqnarray*}
\psi _{t_{X_{\mu }}\circ \sigma _{\lambda \mu }^{X}}\left( \varphi
_{X_{\lambda }}\left( \pi _{\lambda }^{A}\left( a\right) \right) \right)
&=&\psi _{t_{X_{\mu }}}\left( \pi _{\lambda \mu }^{L_{A}(X)}\left( \varphi
_{X_{\lambda }}\left( \pi _{\lambda }^{A}\left( a\right) \right) \right)
\right) \\
&=&\psi _{t_{X_{\mu }}}\left( \varphi _{X_{\mu }}\left( \pi _{\lambda \mu
}^{A}\left( \pi _{\lambda }^{A}\left( a\right) \right) \right) \right) \\
&=&\psi _{t_{X_{\mu }}}\left( \varphi _{X_{\mu }}\left( \pi _{\mu
}^{A}\left( a\right) \right) \right) =\pi _{X_{\mu }}\left( \pi _{\mu
}^{A}\left( a\right) \right) \\
&=&\pi _{X_{\mu }}\circ \pi _{\lambda \mu }^{A}\left( \pi _{\lambda
}^{A}\left( a\right) \right) .
\end{eqnarray*}

Therefore, the pair $(\pi_{X_{\mu }}\circ \pi _{\lambda \mu }^{A},t_{X_{\mu
}}\circ \sigma _{\lambda \mu }^{X})$ is a covariant representation of the $%
C^{\ast }$-correspon\-dence $\left( X_{\lambda },A_{\lambda },\varphi
_{X_{\lambda }}\right) $ on the $C^{\ast }$-algebra $\mathcal{O}_{X_{\mu }}$%
. From the universality of the covariant representation $(\pi _{X_{\lambda
}},t_{X_{\lambda }})$, there exists a unique $C^{\ast }$-morphism $\rho
_{\lambda \mu }:\mathcal{O}_{X_{\lambda }}\rightarrow \mathcal{O}_{X_{\mu }}$%
, such that $\rho _{\lambda \mu }\circ t_{X_{\lambda }}=t_{X_{\mu }}\circ
\sigma _{\lambda \mu }^{X}$ and $\rho _{\lambda \mu }\circ \pi _{X_{\lambda
}}=\pi _{X_{\mu }}\circ \pi _{\lambda \mu }^{A}$. It is easy to check that $%
\{\mathcal{O}_{X_{\lambda }};\rho _{\lambda \mu };\lambda ,\mu \in \Lambda
,\lambda \geq \mu \}$ is an inverse system of $C^{\ast }$-algebras.
\end{proof}

Using Lemma \ref{steplemma} and following the proof of \cite[Proposition 3.5]%
{JZ}, we obtain the following result, which gives a condition under which
one has a covariant representation of an inverse limit pro-$C^{\ast }$%
-correspondence $(X,A,\varphi _{X})$.

\begin{proposition}
\label{algebraassociated}Let $\left( X\emph{,}A,\varphi _{X}\right) $ be an
inverse limit pro-$C^{\ast }$-correspondence with the property that $\pi
_{\lambda \mu }^{A}\left( J_{X_{\lambda }}\right) \subset J_{X_{\mu }}~$,
for all $\lambda ,\mu \in \Lambda $ with $\lambda \geq \mu $. Then there is
a covariant representation $(\pi ,t)$ of $\left( X\emph{,}A,\varphi
_{X}\right) $ on $\lim\limits_{\leftarrow \lambda }\mathcal{O}_{X_{\lambda
}} $.
\end{proposition}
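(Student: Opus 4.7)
The plan is to invoke Lemma~\ref{steplemma} to assemble the inverse system $\{\mathcal{O}_{X_{\lambda}};\rho_{\lambda\mu};\lambda,\mu\in\Lambda,\lambda\geq\mu\}$ of $C^{\ast}$-algebras, set $\mathcal{O}:=\lim\limits_{\leftarrow\lambda}\mathcal{O}_{X_{\lambda}}$, and then build $\pi$ and $t$ componentwise by
\begin{equation*}
\pi(a)=\bigl(\pi_{X_{\lambda}}(\pi_{\lambda}^{A}(a))\bigr)_{\lambda\in\Lambda},\qquad t(x)=\bigl(t_{X_{\lambda}}(\sigma_{\lambda}^{X}(x))\bigr)_{\lambda\in\Lambda}.
\end{equation*}
The defining relations $\rho_{\lambda\mu}\circ\pi_{X_{\lambda}}=\pi_{X_{\mu}}\circ\pi_{\lambda\mu}^{A}$ and $\rho_{\lambda\mu}\circ t_{X_{\lambda}}=t_{X_{\mu}}\circ\sigma_{\lambda\mu}^{X}$ from Lemma~\ref{steplemma}, together with the compatibilities $\pi_{\mu}^{A}=\pi_{\lambda\mu}^{A}\circ\pi_{\lambda}^{A}$ and $\sigma_{\mu}^{X}=\sigma_{\lambda\mu}^{X}\circ\sigma_{\lambda}^{X}$, guarantee that these tuples are coherent, so both $\pi(a)$ and $t(x)$ lie in $\mathcal{O}$.

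Next I would verify that $(\pi,t)$ is a representation of $(X,A,\varphi_{X})$ on $\mathcal{O}$ in the sense of Definition~\ref{morphism of correspondences} applied to the identity pro-$C^{\ast}$-correspondence $(\mathcal{O},\mathcal{O},\mathrm{id}_{\mathcal{O}})$. Continuity and the $\ast$-morphism property of $\pi$ are inherited from the inverse limit structure. The inner-product identity
\begin{equation*}
\langle t(x),t(y)\rangle_{\mathcal{O}}=\pi(\langle x,y\rangle_{A})
\end{equation*}
and the module identity $\pi(a)t(x)=t(\varphi_{X}(a)x)$ reduce, coordinate by coordinate, to the fact that each pair $(\pi_{X_{\lambda}},t_{X_{\lambda}})$ is a representation of $(X_{\lambda},A_{\lambda},\varphi_{X_{\lambda}})$; for the module identity one also uses $\pi_{\lambda}^{L_{A}(X)}\circ\varphi_{X}=\varphi_{X_{\lambda}}\circ\pi_{\lambda}^{A}$, which holds since $(X,A,\varphi_{X})$ is an inverse limit pro-$C^{\ast}$-correspondence.

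For covariance, I first note that $\mathcal{J}_{X}=\bigcap_{\lambda}J_{X}^{\lambda}$ together with Lemma~\ref{ideal} gives $\pi_{\lambda}^{A}(\mathcal{J}_{X})\subset \pi_{\lambda}^{A}(J_{X}^{\lambda})=J_{X_{\lambda}}$ for every $\lambda\in\Lambda$. Hence, for $a\in\mathcal{J}_{X}$, each projection $\pi_{\lambda}^{L_{A}(X)}(\varphi_{X}(a))=\varphi_{X_{\lambda}}(\pi_{\lambda}^{A}(a))$ lies in $K_{A_{\lambda}}(X_{\lambda})$, so under the identification $K_{A}(X)=\lim\limits_{\leftarrow\lambda}K_{A_{\lambda}}(X_{\lambda})$ from the preliminaries the element $\varphi_{X}(a)$ belongs to $K_{A}(X)$ and $\psi_{t}(\varphi_{X}(a))$ is defined. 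The covariance of each universal representation $(\pi_{X_{\lambda}},t_{X_{\lambda}})$ then yields, at level $\lambda$,
\begin{equation*}
\psi_{t_{X_{\lambda}}}\bigl(\varphi_{X_{\lambda}}(\pi_{\lambda}^{A}(a))\bigr)=\pi_{X_{\lambda}}(\pi_{\lambda}^{A}(a)),
\end{equation*}
and passing to the inverse limit gives $\psi_{t}(\varphi_{X}(a))=\pi(a)$, as required.

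The main obstacle is the compatibility between the pro-$C^{\ast}$-morphism $\psi_{t}$ supplied by Lemma~\ref{the map psi} and the $C^{\ast}$-morphisms $\psi_{t_{X_{\lambda}}}$ produced by the universal representations. Concretely, one must check that on rank-one operators $\theta_{x,y}\in\Theta(X)$ the element $\psi_{t}(\theta_{x,y})=t(x)t(y)^{\ast}$ has coordinates $\psi_{t_{X_{\lambda}}}(\theta_{\sigma_{\lambda}^{X}(x),\sigma_{\lambda}^{X}(y)})$, and then extend by linearity and continuity to all of $K_{A}(X)$, so that $\psi_{t}$ truly arises as $\lim\limits_{\leftarrow\lambda}\psi_{t_{X_{\lambda}}}$. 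Once this is in place, the covariance identity at the pro-$C^{\ast}$ level is a direct consequence of the componentwise covariance, exactly as in the argument of \cite[Proposition 3.5]{JZ}.
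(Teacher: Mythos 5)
Your proposal is correct and follows essentially the same route as the paper: invoke Lemma \ref{steplemma} to get the inverse system $\{\mathcal{O}_{X_{\lambda }};\rho _{\lambda \mu }\}$, define $\pi =\lim\limits_{\leftarrow \lambda }\pi _{X_{\lambda }}$ and $t=\lim\limits_{\leftarrow \lambda }t_{X_{\lambda }}$, check the representation axioms coordinatewise as in \cite[Proposition 3.5]{JZ}, establish $\psi _{t}=\lim\limits_{\leftarrow \lambda }\psi _{t_{X_{\lambda }}}$, and deduce covariance from the componentwise covariance of each $(\pi _{X_{\lambda }},t_{X_{\lambda }})$ using $\pi _{\lambda }^{A}(\mathcal{J}_{X})\subset J_{X_{\lambda }}$. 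The only difference is that you spell out details (e.g.\ that $\varphi _{X}(a)\in K_{A}(X)$ for $a\in \mathcal{J}_{X}$) which the paper leaves implicit.
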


\begin{proof}
By Lemma \ref{steplemma}, there is a pro-$C^{\ast }$-morphism $\pi
=\lim\limits_{\leftarrow \lambda }\pi _{X_{\lambda }}$ from $A$ to $%
\lim\limits_{\leftarrow \lambda }\mathcal{O}_{X_{\lambda }}$ and a map $%
t=\lim\limits_{\leftarrow \lambda }t_{X_{\lambda }}$ from $X$ to $%
\lim\limits_{\leftarrow \lambda }\mathcal{O}_{X_{\lambda }}$. Following the
proof of \cite[Proposition 3.5]{JZ}, we show that $(\pi ,t)$ is a
representation of $\left( X\emph{,}A,\varphi _{X}\right) $ on $%
\lim\limits_{\leftarrow \lambda }\mathcal{O}_{X_{\lambda }}$. It is easy to
check that $\psi _{t}=\lim\limits_{\leftarrow \lambda }\psi _{t_{X_{\lambda
}}}$. Let $a\in \mathcal{J}_{X}$. Then
\begin{equation*}
\psi _{t}\left( \varphi _{X}\left( a\right) \right) =\left( \psi
_{t_{X_{\lambda }}}\left( \varphi _{X_{\lambda }}\left( \pi _{\lambda
}^{A}\left( a\right) \right) \right) \right) _{\lambda }=\left( \pi
_{X_{\lambda }}\left( \pi _{\lambda }^{A}\left( a\right) \right) \right)
_{\lambda }=\pi \left( a\right) .
\end{equation*}%
Therefore, $(\pi ,t)$ is a covariant representation of $\left( X\emph{,}%
A,\varphi _{X}\right) $ on $\lim\limits_{\leftarrow \lambda }\mathcal{O}%
_{X_{\lambda }}$.
\end{proof}

Next we find out an equivalent form of the condition $\pi _{\lambda \mu
}^{A}\left( J_{X_{\lambda }}\right) \subset J_{X_{\mu }}~$ in Proposition %
\ref{algebraassociated}.

\begin{lemma}
Let $\left( X,A,\varphi _{X}\right) $ be an inverse limit pro-$C^{\ast }$%
-correspondence. Then the following statements are equivalent

\begin{enumerate}
\item[\emph{(1)}] $\pi _{\mu }^{A}\left( J_{X}^{\lambda }\right) \cap \pi
_{\mu }^{A}\left( X^{-1}(\ker p_{\mu })\right) =\{0\}\ $, for all $\lambda
,\mu \in \Lambda $ with $\mu \leq \lambda ;$

\item[\emph{(2)}] $\pi _{\lambda \mu }^{A}\left( J_{X_{\lambda }}\right)
\subset J_{X_{\mu }}~$, for all $\lambda ,\mu \in \Lambda $ with $\mu \leq
\lambda .$
\end{enumerate}
\end{lemma}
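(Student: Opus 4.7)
The plan is to reduce both (1) and (2) to the vanishing of a product of two closed ideals in the $C^{\ast}$-algebra $A_{\mu}$, and then invoke the standard $C^{\ast}$-algebraic identity $I\cap J=\overline{IJ}$ for closed two-sided ideals, which in particular gives $I\cap J=\{0\}$ iff $IJ=\{0\}$.

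Using Lemma~\ref{ideal} (so $J_{X_{\lambda}}=\pi_{\lambda}^{A}(J_{X}^{\lambda})$) together with $\pi_{\lambda\mu}^{A}\circ\pi_{\lambda}^{A}=\pi_{\mu}^{A}$, condition (2) is equivalent to $\pi_{\mu}^{A}(J_{X}^{\lambda})\subset J_{X_{\mu}}$. By the Katsura description recalled in the remark preceding Lemma~\ref{ideal}, for the $C^{\ast}$-correspondence $(X_{\mu},A_{\mu},\varphi_{X_{\mu}})$ one has $J_{X_{\mu}}=\varphi_{X_{\mu}}^{-1}(K_{A_{\mu}}(X_{\mu}))\cap(\ker\varphi_{X_{\mu}})^{\perp}$. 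Thus for each $a\in J_{X}^{\lambda}$ I must verify (a) $\varphi_{X_{\mu}}(\pi_{\mu}^{A}(a))\in K_{A_{\mu}}(X_{\mu})$ and (b) $\pi_{\mu}^{A}(a)\cdot b=0$ for every $b\in\ker\varphi_{X_{\mu}}$. Condition (a) is automatic: since $(X,A,\varphi_{X})$ is an inverse limit correspondence, $\varphi_{X_{\mu}}\circ\pi_{\mu}^{A}=\pi_{\lambda\mu}^{L_{A}(X)}\circ\pi_{\lambda}^{L_{A}(X)}\circ\varphi_{X}$, and $a\in J_{X}^{\lambda}$ forces $\pi_{\lambda}^{L_{A}(X)}(\varphi_{X}(a))\in K_{A_{\lambda}}(X_{\lambda})$, whose image under the connecting map $\pi_{\lambda\mu}^{L_{A}(X)}$ lies in $K_{A_{\mu}}(X_{\mu})$.

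To translate (b), I would identify $\ker\varphi_{X_{\mu}}=\pi_{\mu}^{A}(X^{-1}(\ker p_{\mu}))$: by Remark~\ref{inv}, $X^{-1}(\ker p_{\mu})=\ker(\pi_{\mu}^{L_{A}(X)}\circ\varphi_{X})$, and the inverse limit identity $\varphi_{X_{\mu}}\circ\pi_{\mu}^{A}=\pi_{\mu}^{L_{A}(X)}\circ\varphi_{X}$ combined with surjectivity of $\pi_{\mu}^{A}$ gives the equality. Hence (b) for every $a\in J_{X}^{\lambda}$ is the statement
\[
\pi_{\mu}^{A}(J_{X}^{\lambda})\cdot\pi_{\mu}^{A}(X^{-1}(\ker p_{\mu}))=\{0\}.
\]
Both factors are closed ideals of $A_{\mu}$: the first equals $\pi_{\lambda\mu}^{A}(J_{X_{\lambda}})$, the image of a closed ideal under the surjective $C^{\ast}$-morphism $\pi_{\lambda\mu}^{A}$, while the second is $\ker\varphi_{X_{\mu}}$. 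Applying $I\cap J=\overline{IJ}$ in $A_{\mu}$ then shows the product vanishes precisely when $\pi_{\mu}^{A}(J_{X}^{\lambda})\cap\pi_{\mu}^{A}(X^{-1}(\ker p_{\mu}))=\{0\}$, which is (1).

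The main obstacle is bookkeeping rather than a conceptual hurdle: one must keep the three canonical maps $\pi_{\mu}^{A}$, $\pi_{\lambda\mu}^{A}$, $\pi_{\lambda\mu}^{L_{A}(X)}$ straight on the distinct ideals $J_{X}^{\lambda}$, $J_{X_{\lambda}}$, $X^{-1}(\ker p_{\mu})$, and verify that each relevant image is closed so that the $C^{\ast}$-algebraic equality $I\cap J=\overline{IJ}$ genuinely applies.
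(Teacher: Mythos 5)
Your proof is correct and rests on the same ingredients as the paper's: Lemma \ref{ideal}, the identification $\ker \varphi _{X_{\mu }}=\pi _{\mu }^{A}\left( X^{-1}(\ker p_{\mu })\right) $ coming from Remark \ref{inv}, and the observation that the compactness requirement $\varphi _{X_{\mu }}(\pi _{\mu }^{A}(a))\in K_{A_{\mu }}(X_{\mu })$ is automatic for $a\in J_{X}^{\lambda }$ because $\pi _{\lambda \mu }^{L_{A}(X)}$ carries $K_{A_{\lambda }}(X_{\lambda })$ into $K_{A_{\mu }}(X_{\mu })$. The only real difference is in the packaging of the last step: you close the loop in one stroke with the identity $I\cap J=\overline{IJ}$ for closed ideals of $A_{\mu }$ (which requires, as you correctly check, that $\pi _{\mu }^{A}(J_{X}^{\lambda })=\pi _{\lambda \mu }^{A}(J_{X_{\lambda }})$ is closed), whereas the paper runs the two implications separately by element chases --- for $(1)\Rightarrow (2)$ it notes that $\pi _{\mu }^{A}(ab)$ lies in the intersection, and for $(2)\Rightarrow (1)$ it uses that an element of $J_{X_{\mu }}\cap \ker \varphi _{X_{\mu }}\subset (\ker \varphi _{X_{\mu }})^{\bot }\cap \ker \varphi _{X_{\mu }}$ must vanish --- which is exactly the content of the two inclusions in that same identity.
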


\begin{proof}
If $\left( X,A,\varphi _{X}\right) $ is an inverse limit pro-$C^{\ast }$%
-correspondence, then $\varphi _{X}=\lim\limits_{\leftarrow \lambda
}\varphi _{X_{\lambda }}$ and $\pi _{\lambda }^{L_{A}(X)}\circ
\varphi _{X}=\varphi _{X_{\lambda }}\circ \pi _{\lambda }^{A}$, for
all $\lambda \in \Lambda $.

$\left( 1\right) \Rightarrow \left( 2\right) $ Let $\pi _{\lambda
}^{A}\left( a\right) \in J_{X_{\lambda }}$, $a\in A$. Then
\begin{equation*}
\varphi _{X_{\mu }}\left( \pi _{\mu }^{A}\left( a\right) \right) =\pi
_{\lambda \mu }^{L_{A}(X)}\left( \varphi _{X_{\lambda }}\left( \pi _{\lambda
}^{A}\left( a\right) \right) \right) \in \pi _{\lambda \mu
}^{L_{A}(X)}\left( K_{A_{\lambda }}(X_{\lambda })\right) =K_{A_{\mu
}}(X_{\mu }).
\end{equation*}
If $\pi _{\mu }^{A}\left( b\right) \in \ker \varphi _{X_{\mu }}$,
$b\in A$, then
\begin{equation*}
b\in \ker (\varphi _{X_{\mu }}\circ \pi _{\mu }^{A})=\ker (\pi _{\mu
}^{L_{A}(X)}\circ \varphi _{X})=X^{-1}(\ker p_{\mu }).
\end{equation*}%
Therefore,%
\begin{equation*}
\pi _{\mu }^{A}\left( a\right) \pi _{\mu }^{A}\left( b\right) =\pi _{\mu
}^{A}\left( ab\right) \in \pi _{\mu }^{A}\left( J_{X}^{\lambda }\right) \cap
\pi _{\mu }^{A}\left( X^{-1}(\ker p_{\mu })\right) =\{0\},
\end{equation*}%
and so $\pi _{\mu }^{A}\left( a\right) =\pi _{\lambda \mu }^{A}\left( \pi
_{\lambda }^{A}\left( a\right) \right) \in J_{X_{\mu }}.$

$\left( 2\right) \Rightarrow \left( 1\right) $ Let $\pi _{\mu
}^{A}\left( a\right) \in \pi _{\mu }^{A}\left( J_{X}^{\lambda
}\right) \cap \pi _{\mu }^{A}\left( X^{-1}(\ker p_{\mu
})\right),\,a\in A $. Since
\begin{equation*}
\pi _{\mu }^{A}\left( J_{X}^{\lambda }\right) =\pi _{\lambda \mu
}^{A}\left( \pi _{\lambda }^{A}\left( J_{X}^{\lambda }\right)
\right) =\pi_{\lambda \mu}^{A}(J_{X_{\lambda}}) \subseteq J_{X_{\mu
}},
\end{equation*}
 where the second equality is due to Lemma \ref{ideal}, and since
$\pi _{\mu }^{A}\left( X^{-1}(\ker p_{\mu })\right) =\ker \varphi
_{X_{\mu }}$ (see Remark \ref{inv}) we have $\pi _{\mu }^{A}\left(
a\right) \in J_{X_{\mu }}\cap \pi _{\mu }^{A}\left( X^{-1}(\ker
p_{\mu })\right) $ and so $\pi _{\mu }^{A}\left( a\right) =0$.
\end{proof}

\begin{remark}
\emph{Since $\pi _{\mu }^{A}\left( \ker p_{\mu }\right) =\{0\}$ for all $\mu
\in \Lambda ,$ $\pi _{\mu }^{A}\left( J_{X}^{\lambda }\right) \cap \pi _{\mu
}^{A}\left( X^{-1}(\ker p_{\mu })\right) \subset \pi _{\mu }^{A}(\ker p_{\mu
})$ for all $\lambda ,\mu \in \Lambda $ with $\mu \leq \lambda $ if and only
if $\pi _{\mu }^{A}\left( J_{X}^{\lambda }\right) \cap \pi _{\mu }^{A}\left(
X^{-1}(\ker p_{\mu })\right) =\{0\}\ $, for all $\lambda ,\mu \in \Lambda $
with $\mu \leq \lambda .$}
\end{remark}

\begin{remark}
\emph{According to \cite[Definition 4.8]{K3} the condition $X(\ker
p_{\lambda })\subset \ker p_{\lambda }$ set out in Lemma \ref{steplemma1}
can be rephrased as $\ker p_{\lambda }$ is positively invariant for every $%
\lambda \in \Lambda $. Also the condition }$\pi _{\lambda \mu
}^{A}(J_{X_{\lambda }})\subset J_{X_{\mu }}$\emph{\ for all $\lambda ,\mu
\in \Lambda $, with $\,\lambda \geq \mu $, set out in Lemma \ref{steplemma}
resembles to the notion of negative invariance of an ideal given in \cite[%
Definition 4.8]{K3}.}
\end{remark}

\begin{definition}
\label{deffinvariant}\emph{Let $\left( X,A,\varphi _{X}\right) $ be a pro-$%
C^{\ast }$-correspondence.} An ideal \emph{$I$ of $A$} is positively
invariant\emph{\ if $X(I)\subset I$,} negatively invariant \emph{if $\ \pi
_{\mu }^{A}\left( J_{X}^{\lambda }\right) \cap \pi _{\mu }^{A}\left(
X^{-1}(I)\right) \subset \pi _{\mu }^{A}\left( I\right) $, for all $\lambda
,\mu \in \Lambda $ with $\lambda \geq \mu $ and} invariant \emph{if $I$ is
both positively and negatively invariant.}
\end{definition}

According to Definition \ref{deffinvariant}, Lemma \ref{steplemma1}, Lemma %
\ref{steplemma}, and Proposition \ref{algebraassociated} we get the
following result.

\begin{proposition}
\label{conclusion} Let $\left( X,A,\varphi _{X}\right) $ be a pro-$C^{\ast }$%
-correspondence. If $\ \ker p_{\lambda },\lambda \in \Lambda $, are
invariant, then there exists a covariant representation of $\left(
X,A,\varphi _{X}\right) $ on $\lim\limits_{\leftarrow \lambda }\mathcal{O}%
_{X_{\lambda }}.$
\end{proposition}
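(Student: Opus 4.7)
The plan is to unwind the definition of invariance and chain together the lemmas proved earlier in Section 5. First, I would observe that positive invariance of every $\ker p_\lambda$ is precisely the hypothesis of Lemma \ref{steplemma1}: namely $X(\ker p_\lambda)\subset \ker p_\lambda$ for every $\lambda\in\Lambda$. Hence Lemma \ref{steplemma1} lets me upgrade the pro-$C^{\ast}$-correspondence $(X,A,\varphi_X)$ to an inverse limit pro-$C^{\ast}$-correspondence, giving the decompositions $A=\lim\limits_{\leftarrow\lambda}A_\lambda$, $X=\lim\limits_{\leftarrow\lambda}X_\lambda$ and $\varphi_X=\lim\limits_{\leftarrow\lambda}\varphi_{X_\lambda}$. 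This already puts me in the setting required by Lemma \ref{steplemma} and Proposition \ref{algebraassociated}.

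Next I would turn to negative invariance. By definition, for every $\lambda\geq\mu$ we have
\begin{equation*}
\pi_\mu^A(J_X^\lambda)\cap\pi_\mu^A(X^{-1}(\ker p_\mu))\subset\pi_\mu^A(\ker p_\mu)=\{0\}.
\end{equation*}
This is exactly condition (1) of the (unnumbered) lemma immediately preceding the remark defining invariance, so by that lemma I may conclude
\begin{equation*}
\pi_{\lambda\mu}^A(J_{X_\lambda})\subset J_{X_\mu}\quad\text{for all }\lambda\geq\mu.
\end{equation*}
Here the remark right before Definition \ref{deffinvariant} is what lets me pass from ``$\subset\pi_\mu^A(\ker p_\mu)$'' to ``$=\{0\}$'' without losing any content, since $\pi_\mu^A$ kills $\ker p_\mu$.

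Having verified the two hypotheses of Proposition \ref{algebraassociated}, namely that $(X,A,\varphi_X)$ is an inverse limit pro-$C^{\ast}$-correspondence and that $\pi_{\lambda\mu}^A(J_{X_\lambda})\subset J_{X_\mu}$ for all $\lambda\geq\mu$, I simply apply that proposition to produce a covariant representation $(\pi,t)=(\lim\limits_{\leftarrow\lambda}\pi_{X_\lambda},\lim\limits_{\leftarrow\lambda}t_{X_\lambda})$ of $(X,A,\varphi_X)$ on the inverse limit $\lim\limits_{\leftarrow\lambda}\mathcal{O}_{X_\lambda}$, which is exactly what is to be shown.

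There is no real obstacle here: the proposition is a bookkeeping corollary that reformulates Lemmas \ref{steplemma1} and \ref{steplemma} together with Proposition \ref{algebraassociated} in the language of Definition \ref{deffinvariant}. The only subtle point is to make sure to invoke the remark after Definition \ref{deffinvariant} so that negative invariance of $\ker p_\mu$, as literally written in the definition (an inclusion into $\pi_\mu^A(\ker p_\mu)$), is recognized as the intersection being $\{0\}$; once this is pointed out, the rest is a direct application of the already established results.
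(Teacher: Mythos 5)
Your proposal is correct and follows exactly the route the paper intends: the paper gives no written-out proof but states that the proposition follows "according to Definition \ref{deffinvariant}, Lemma \ref{steplemma1}, Lemma \ref{steplemma}, and Proposition \ref{algebraassociated}," which is precisely the chain you assemble (positive invariance $\Rightarrow$ inverse limit correspondence via Lemma \ref{steplemma1}; negative invariance plus the remark that $\pi_{\mu}^{A}(\ker p_{\mu})=\{0\}$ $\Rightarrow$ $\pi_{\lambda\mu}^{A}(J_{X_{\lambda}})\subset J_{X_{\mu}}$ via the equivalence lemma; then Proposition \ref{algebraassociated}). Nothing to add.
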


In order to show in Theorem \ref{PR} below that $\mathcal{O}_{X}$ exists, in
case $X$ is a pro-$C^{\ast }$-correspondence endowed with the property which
is described in Proposition \ref{conclusion}, we are going to use the notion
of a $\mathcal{T}$-pair for a ${C}${$^{\ast }$-correspondence, which was
introduced and studied in \cite[Sections 5-7]{K3}. We recall that given a $%
C^{\ast }$-correspondence $(X,A,\varphi _{X})$, a $\mathcal{T}$-pair of $X$
is a pair $\omega =(I,I^{\prime })$ of ideals $I,I^{\prime }$ of $A$ such
that $X(I)\subset I$ and $I\subset I^{\prime }\subset J(I)=\{a\in A:\phi
_{I}(\varphi _{X}(a))\in K_{A}(X/XI),\,aX^{-1}(I)\subset I\}$ \cite[%
Definition 5.6]{K3} (for the definition of $\phi _{I}$ see Remark \ref{inv}%
). Also for two $\mathcal{T}$-pairs $\omega _{1}=(I_{1},I_{1}^{\prime })$, $%
\omega _{2}=(I_{2},I_{2}^{\prime })$, we denote $\omega _{1}\subset \omega
_{2}$, if $I_{1}\subset I_{2}$ and $I_{1}^{\prime }\subset I_{2}^{\prime }$ 
\cite[Definition 5.7]{K3}. }

Let $\left( X,A,\varphi _{X}\right) $ be a pro-$C^{\ast }$-correspondence
such that $\ker p_{\lambda },\lambda \in \Lambda $ are invariant. For each $%
\lambda \in \Lambda ,$ $\omega _{\lambda }=\left( \{0\},(\mathcal{J}%
_{X})_{\lambda }\right) $ is a $\mathcal{T}$-pair of the $C^{\ast }$%
-correspondence $(X_{\lambda },A_{\lambda },\varphi _{X_{\lambda }})$, since 
\begin{equation*}
(\mathcal{J}_{X})_{\lambda }=\pi _{\lambda }^{A}\left( \mathcal{J}%
_{X}\right) \subset \pi _{\lambda }^{A}\left( J_{X}^{\lambda }\right)
=J_{X_{\lambda }}=J(\{0\}).
\end{equation*}%
Let $\left( \pi _{\omega _{\lambda }},t_{\omega _{\lambda }}\right) $ be the
representation of the $C^{\ast }$-correspondence $\left( X_{\lambda
},A_{\lambda },\varphi _{X_{\lambda }}\right) $ on the $C^{\ast }$-algebra $%
\mathcal{O}_{X_{\omega _{\lambda }}}$associated to the $\mathcal{T}$-pair $%
\omega _{\lambda }$ (see \cite[Definition 6.10]{K3}). Moreover, $\mathcal{O}%
_{X_{\omega _{\lambda }}}$ is generated by the images of $t_{\omega
_{\lambda }}$and $\pi _{\omega _{\lambda }}\ $\cite[Proposition 6.11]{K3}.

Let $\lambda ,\mu \in \Lambda $ with $\lambda \geq \mu $. Then $\left( \pi
_{\omega _{\mu}}\circ \pi _{\lambda \mu }^{A},t_{\omega _{\mu }}\circ \sigma
_{\lambda \mu }^{X} \right) $ is a representation of the $C^{\ast }$%
-correspondence $\left( X_{\lambda },A_{\lambda },\varphi _{X_{\lambda
}}\right) $, and let $\omega _{\left( \pi _{\omega _{\mu }}\circ \pi
_{\lambda \mu }^{A},t_{\omega _{\mu }}\circ \sigma _{\lambda \mu
}^{X}\right) }$ be the $\mathcal{T}$-pair associated to this representation 
\cite[Definition 5.9]{K3}. Then, by definition, 
\begin{equation*}
\omega _{\left( \pi _{\omega _{\mu }}\circ \pi _{\lambda \mu }^{A},t_{\omega
_{\mu }}\circ \sigma _{\lambda \mu }^{X}\right) }=\left( \ker \left(\pi
_{\omega _{\mu }}\circ \pi _{\lambda \mu }^{A} \right) ,\left( \pi _{\omega
_{\mu }}\circ \pi _{\lambda \mu }^{A}\right) ^{-1}\left( \psi _{t_{\omega
_{\mu }}\circ \sigma _{\lambda \mu }^{X}}\left( K_{A_{\lambda }}\left(
X_{\lambda }\right) \right) \right) \right) .
\end{equation*}%
Clearly $\{0\}\subset \ker (\pi _{\omega _{\mu }}\circ \pi _{\lambda \mu
}^{A}),$ and since 
\begin{equation*}
\left( \pi _{\omega _{\mu }}\circ \pi _{\lambda \mu }^{A}\right) \left( (%
\mathcal{J}_{X})_{\lambda }\right) =\pi _{\omega _{\mu }}\left( (\mathcal{J}%
_{X})_{\mu }\right) \subset \psi _{t_{\omega _{\mu }}}\left( K_{A_{\mu
}}\left( X_{\mu }\right) \right) =\psi _{t_{\omega _{\mu }}\circ \sigma
_{\lambda \mu }^{X}}\left( K_{A_{\lambda }}\left( X_{\lambda }\right)
\right) ,
\end{equation*}%
we have $\omega _{\lambda }\subset \omega _{\left( \pi _{\omega _{\mu
}}\circ \pi _{\lambda \mu }^{A},t_{\omega _{\mu }}\circ \sigma _{\lambda \mu
}^{X}\right) }$. On the other hand, 
\begin{equation*}
C^{\ast }\text{-}(t_{\omega _{\mu }}\circ \sigma _{\lambda \mu }^{X}\left(
X_{\lambda }\right) ,\pi _{\omega _{\mu }}\circ \pi _{\lambda \mu
}^{A}\left( A_{\lambda }\right) )=C^{\ast }\text{-}(t_{\omega _{\mu }}\left(
X_{\mu }\right) ,\pi _{\omega _{\mu }}\left( A_{\mu }\right) ),
\end{equation*}
and then, by \cite[Theorem 7.1]{K3}, there exists a unique surjective $%
C^{\ast }$-morphism $\rho_{\lambda \mu }^{\omega }:$ $\mathcal{O}_{X_{\omega
_{\lambda }}}\rightarrow \mathcal{O}_{X_{\omega _{\mu }}}$ such that $\rho
_{\lambda \mu }^{\omega }\circ t_{\omega _{\lambda }}=t_{\omega _{\mu
}}\circ \sigma _{\lambda \mu }^{X}$ and $\rho _{\lambda \mu }^{\omega }\circ
\pi _{\omega _{\lambda }}=\pi _{\omega _{\mu }}\circ \pi _{\lambda \mu }^{A}$%
. It is easy to check that $\{\mathcal{O}_{X_{\omega _{\lambda }}};\rho
_{\lambda \mu }^{\omega };\lambda ,\mu \in \Lambda ,\lambda \geq \mu \}$ is
an inverse system of $C^{\ast }$-algebras.

The following theorem gives a condition under which $\mathcal{O}_{X}$ exists.

\begin{theorem}
\label{PR}Let $\left( X,A,\varphi _{X}\right) $ be a pro-$C^{\ast }$%
-correspondence such that $\ker p_{\lambda },\lambda \in \Lambda $, are
invariant. Then there exists $\mathcal{O}_{X}$. Moreover, $\mathcal{O}%
_{X}=\lim\limits_{\leftarrow \lambda }\mathcal{O}_{X_{\omega _{\lambda }}},$
up to a pro-$C^{\ast }$-isomorphism.
\end{theorem}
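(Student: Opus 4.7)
The plan is to define $\mathcal{O}_X := \lim\limits_{\leftarrow \lambda} \mathcal{O}_{X_{\omega_\lambda}}$, equip it with canonical maps $(\pi_X, t_X)$ coming from the inverse limits of the $(\pi_{\omega_\lambda}, t_{\omega_\lambda})$, verify that this pair is a covariant representation of $(X, A, \varphi_X)$, and establish the universal property by reducing, for each index $\delta$ of the target pro-$C^*$-algebra, to Katsura's universality theorem \cite[Theorem 7.1]{K3} for the $C^*$-algebras $\mathcal{O}_{X_{\omega_\lambda}}$. Since the inverse system $\{\mathcal{O}_{X_{\omega_\lambda}}; \rho^\omega_{\lambda\mu}\}$ and the intertwining identities $\rho^\omega_{\lambda\mu} \circ t_{\omega_\lambda} = t_{\omega_\mu} \circ \sigma^X_{\lambda\mu}$, $\rho^\omega_{\lambda\mu} \circ \pi_{\omega_\lambda} = \pi_{\omega_\mu} \circ \pi^A_{\lambda\mu}$ have already been constructed in the paragraph preceding the theorem, the maps $\pi_X := \lim\limits_{\leftarrow \lambda} \pi_{\omega_\lambda}$ and $t_X := \lim\limits_{\leftarrow \lambda} t_{\omega_\lambda}$ are well defined.

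To see that $(\pi_X, t_X)$ is a covariant representation, I would note that the inner-product and module-action conditions of Definition 4.1 descend coordinate-wise from the fact that each $(\pi_{\omega_\lambda}, t_{\omega_\lambda})$ is a representation of $(X_\lambda, A_\lambda, \varphi_{X_\lambda})$, and an argument parallel to the proof of Proposition 5.11 shows $\psi_{t_X} = \lim\limits_{\leftarrow \lambda} \psi_{t_{\omega_\lambda}}$. For the covariance condition, given $a \in \mathcal{J}_X$ one has $\pi^A_\lambda(a) \in (\mathcal{J}_X)_\lambda \subset J_{X_\lambda}$; since the second component of the $\mathcal{T}$-pair $\omega_\lambda$ is exactly $(\mathcal{J}_X)_\lambda$, the covariance of $(\pi_{\omega_\lambda}, t_{\omega_\lambda})$ relative to $\omega_\lambda$ gives $\psi_{t_{\omega_\lambda}}(\varphi_{X_\lambda}(\pi^A_\lambda(a))) = \pi_{\omega_\lambda}(\pi^A_\lambda(a))$, and assembling over $\lambda$ yields $\psi_{t_X}(\varphi_X(a)) = \pi_X(a)$.

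For universality, given a covariant representation $(\pi, t)$ of $(X, A, \varphi_X)$ on a pro-$C^*$-algebra $B = \lim\limits_{\leftarrow \delta} B_\delta$, I would, for each $\delta \in \Delta$, combine the continuity of $\pi$ with the estimate $q_\delta^B(t(x))^2 \le p_\lambda^A(x)^2$ from the remark after Definition 4.1 to choose $\lambda = \lambda(\delta) \in \Lambda$, and descend $(\pi^B_\delta \circ \pi, \pi^B_\delta \circ t)$ to a representation $(\tilde\pi_\delta, \tilde t_\delta)$ of the $C^*$-correspondence $(X_\lambda, A_\lambda, \varphi_{X_\lambda})$ on $B_\delta$. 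The covariance of $(\pi, t)$ on $\mathcal{J}_X$ translates into the inclusion $\omega_\lambda \subset \omega_{(\tilde\pi_\delta, \tilde t_\delta)}$ in Katsura's ordering of $\mathcal{T}$-pairs, so \cite[Theorem 7.1]{K3} supplies a unique $C^*$-morphism $\Phi_\delta : \mathcal{O}_{X_{\omega_\lambda}} \to B_\delta$ intertwining the two representations. Pre-composing with the canonical projection $\mathcal{O}_X \to \mathcal{O}_{X_{\omega_\lambda}}$ and taking the inverse limit over $\delta$ assembles these into a pro-$C^*$-morphism $\Phi : \mathcal{O}_X \to B$ with $\Phi \circ \pi_X = \pi$ and $\Phi \circ t_X = t$. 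Uniqueness follows because each $\mathcal{O}_{X_{\omega_\lambda}}$ is generated by the images of $\pi_{\omega_\lambda}$ and $t_{\omega_\lambda}$ by \cite[Proposition 6.11]{K3}, so $\mathcal{O}_X$ is generated as a pro-$C^*$-algebra by $\pi_X(A) \cup t_X(X)$.

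The main obstacle is twofold. First, descending $(\pi, t)$ to the $C^*$-correspondence level $\lambda(\delta)$ invokes both halves of the invariance hypothesis: positive invariance of $\ker p_\lambda$ is what lets $X_\lambda$ and $\varphi_{X_\lambda}$ exist as in Lemma 3.6, while negative invariance guarantees $(\mathcal{J}_X)_\lambda \subset J_{X_\lambda}$, making $\omega_\lambda$ a legitimate $\mathcal{T}$-pair. Second, once the morphisms $\Phi_\delta$ have been produced, I must check that they are compatible with the structure maps of the inverse system $(B_\delta)_{\delta \in \Delta}$ so that they genuinely assemble into a morphism of the inverse limits; this compatibility should follow from the uniqueness clause of \cite[Theorem 7.1]{K3} after fixing a monotone choice $\delta \mapsto \lambda(\delta)$ along the directed set $\Delta$.
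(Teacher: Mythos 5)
Your proposal is correct and follows essentially the same route as the paper: define $\mathcal{O}_X$ as $\lim\limits_{\leftarrow \lambda}\mathcal{O}_{X_{\omega_\lambda}}$ with the inverse-limit covariant representation $(\lim\limits_{\leftarrow \lambda}\pi_{\omega_\lambda}, \lim\limits_{\leftarrow \lambda}t_{\omega_\lambda})$, verify covariance via $(\mathcal{J}_X)_\lambda$ being the second component of the $\mathcal{T}$-pair $\omega_\lambda$, and obtain universality by descending a given covariant representation to each $B_\delta$ and invoking \cite[Theorem 7.1]{K3} together with the inclusion $\omega_\lambda \subset \omega_{(\pi_\delta,t_\delta)}$. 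The compatibility of the resulting morphisms with the connecting maps of $(B_\delta)_\delta$, which you flag as the remaining obstacle, is handled in the paper exactly as you suggest, by the argument of \cite[Proposition 3.5]{JZ}.
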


\begin{proof}
By the above comments $\left( \pi _{\omega _{\lambda }}\right) _{\lambda }$
is an inverse system of $C^{\ast }$-morphisms and $\left( t_{\omega
_{\lambda }}\right) _{\lambda }$ is an inverse system of linear maps. Let $%
t_{\omega }=\lim\limits_{\leftarrow \lambda }t_{\omega _{\lambda }}$ and $%
\pi _{\omega }=\lim\limits_{\leftarrow \lambda }\pi _{\omega _{\lambda }}$.
Following the proof of \cite[Proposition 3.5]{JZ}, we show that $(\pi
_{\omega },t_{\omega })$ is a representation of $\left( X\emph{,}A,\varphi
_{X}\right) $ on $\lim\limits_{\leftarrow \lambda }\mathcal{O}_{X_{\omega
_{\lambda }}}$. It is easy to check that $\psi _{t_{\omega
}}=\lim\limits_{\leftarrow \lambda }\psi _{t_{\omega _{\lambda }}}$. For $%
a\in \mathcal{J}_{X}$, we have
\begin{eqnarray*}
\psi _{t_{\omega }}\left( \varphi _{X}\left( a\right) \right) &=&\left( \psi
_{t_{\omega _{\lambda }}}\left( \varphi _{X_{\lambda }}\left( \pi _{\lambda
}^{A}\left( a\right) \right) \right) \right) _{\lambda } \\
&&\text{\cite[Lemma 5.10 (v)]{K3}} \\
&=&\left( \pi _{t_{\omega _{\lambda }}}\left( \pi _{\lambda }^{A}\left(
a\right) \right) \right) _{\lambda }=\pi _{\omega }\left( a\right) .
\end{eqnarray*}%
Therefore, $\left( \pi _{\omega },t_{\omega }\right) $ is a covariant
representation of $\left( X\emph{,}A,\varphi _{X}\right) $. Moreover, pro-$%
C^{\ast }$-$(\pi _{\omega }(A),t_{\omega }(X))=\lim\limits_{\leftarrow
\lambda }\mathcal{O}_{X_{\omega _{\lambda }}}$.

Let $\left( \pi,t \right) $ be a covariant representation of $\left( X\emph{,%
}A,\varphi _{X}\right) $ on a pro-$C^{\ast }$-algebra $B$. Then, for each $%
\delta \in \Delta ,$ there exists a representation $\left( \pi _{\delta
},t_{\delta }\right) $ of the $C^{\ast }$-correspondence $\left( X_{\lambda
},A_{\lambda },\varphi _{X_{\lambda }}\right) $ on the $C^{\ast }$-algebra $%
B_{\delta }$ such that $\pi _{\delta }^{B}\circ t=t_{\delta }\circ \sigma
_{\lambda }^{X}$ and $\pi _{\delta }^{B}\circ \pi =\pi _{\delta }\circ \pi
_{\lambda }^{A}.$ Since,
\begin{eqnarray*}
\pi _{\delta }\left( \left( \mathcal{J}_{X}\right) _{\lambda }\right) &=&\pi
_{\delta }^{B}\circ \pi \left( \mathcal{J}_{X}\right) =\pi _{\delta
}^{B}\left( \psi _{t}\left( \varphi _{X}\left( \mathcal{J}_{X}\right)
\right) \right) \subset \pi _{\delta }^{B}\left( \psi _{t}\left( K_{A}\left(
X\right) \right) \right) \\
&=&\psi _{t_{\delta }}\left( \pi _{\lambda }^{L_{A}(X)}\left( K_{A}\left(
X\right) \right) \right) =\psi _{t_{\delta }}\left( K_{A_{\lambda }}\left(
X_{\lambda }\right) \right) ,
\end{eqnarray*}%
$\omega _{\lambda }\subset \omega _{\left( t_{\delta },\pi _{\delta }\right)
}$,$\ $and then, by \cite[Theorem 7.1]{K3}, there exists a surjective $%
C^{\ast }$-morphism $\widetilde{\rho }_{\delta }:\mathcal{O}_{X_{\omega
_{\lambda }}}\rightarrow C^{\ast }$-$(t_{\delta }\left( X_{\lambda }\right)
,\pi _{\delta }\left( A_{\lambda }\right) )$ such that $\widetilde{\rho }%
_{\delta }\circ t_{\omega _{\lambda }}=t_{\delta }$ and $\widetilde{\rho }%
_{\delta }\circ \pi _{\omega _{\lambda }}=\pi _{\delta }$.
Therefore, there is a continuous $\ast $-morphism $\rho _{\delta
}:\lim\limits_{\leftarrow \lambda }\mathcal{O}_{X_{\omega _{\lambda
}}}\rightarrow B_{\delta },$ with $\rho _{\delta }=\widetilde{\rho
}_{\delta }\circ \chi _{\lambda }$, where $\chi _{\lambda }$ is the
canonical projection from $\lim\limits_{\leftarrow \lambda
}\mathcal{O}_{X_{\omega _{\lambda }}}$ to $\mathcal{O}_{X_{\omega
_{\lambda }}}$. For each $\delta _{1},\delta _{2}\in \Delta $, such
that $\delta _{1}\geq \delta _{2}$, we have $\pi _{\delta _{1}\delta
_{2}}^{B}\circ \rho
_{\delta _{1}}=\rho _{\delta _{2}}$ (see the proof of Proposition 3.5 \cite%
{JZ}), and so there is a pro-$C^{\ast }$-morphism $\rho
:\lim\limits_{\leftarrow \lambda }\mathcal{O}_{X_{\omega _{\lambda
}}}\rightarrow B$ such that $\pi _{\delta }^{B}\circ \rho =\rho
_{\delta }$, for all $\delta \in \Delta .$ It is easy to check that
$\rho \circ t_{\omega }=t$ and $\rho \circ \pi _{\omega }=\pi .$
Therefore the
result follows from Definition \ref{universal} and Remark \ref{uniqueness}%
(2).
\end{proof}

\section{ Pro-$C^{\ast }$-correspondences and crossed products of Hilbert
pro-$C^{\ast }$-bimodules}

Let $X$ be a Hilbert bimodule over a pro-$C^{\ast }$-algebra $A$ whose
topology is given by the family of $C^{\ast }$seminorms $\,\{p_{\lambda
},\,\lambda \in \Lambda \}$.

\begin{definition}
\emph{\cite[Definition 3.1]{JZ}} A covariant representation \emph{of a
Hilbert $A-A$ pro-$C^{\ast }$-bimodule $X$ on a pro-$C^{\ast }$-algebra $B$
is a pair $\left( \varphi _{X},\varphi _{A}\right) $ consisting of a pro-$%
C^{\ast }$-morphism $\varphi _{A}:A\rightarrow B$ and a map $\varphi
_{X}:X\rightarrow B$ which verifies the following relations:}

\begin{enumerate}
\item[\emph{(1)}] $\varphi _{X}\left( xa\right) =\varphi _{X}\left( x\right)
\varphi _{A}\left( a\right) $ \emph{and} $\varphi _{X}\left( ax\right)
=\varphi _{A}\left( a\right) \varphi _{X}\left( x\right) $ \emph{for all} $%
x\in X$ \emph{and for all }$a\in A$\emph{$.$}

\item[\emph{(2)}] {{{$\varphi _{X}(x)^{\ast }\varphi _{X}(y)=\varphi
_{A}(<x,y>_{A})$ \emph{and} {$\varphi _{X}(x)\varphi _{X}(y)^{\ast }=\varphi
_{A}(_{A}<x,y>)$ }}}} \emph{for all} $x,y\in X.$
\end{enumerate}
\end{definition}

\begin{definition}
\emph{\cite[Definition 3.3]{JZ}} The crossed product of $A$ by $X$\emph{\ is
a pro-$C^{\ast }$-algebra, denoted by $A\times _{X}\mathbb{Z}$, and a
covariant representation $\left( i_{X},i_{A}\right) $ of $\left( X,A\right) $
on $A\times _{X}\mathbb{Z}$ with the property that for any covariant
representation $\left( \varphi _{X},\varphi _{A}\right) $ of $\left(
X,A\right) $ on a pro-$C^{\ast }$-algebra $B$, there is a unique pro-$%
C^{\ast }$-morphism $\Phi :A\times _{X}\mathbb{Z\rightarrow }B$ such that $%
\Phi \circ i_{X}=\varphi _{X}$ and $\Phi \circ i_{A}=\varphi _{A}$.}
\end{definition}

We will show that the crossed product $A\times _{X}\mathbb{Z}$ of $A$ by $X$
is isomorphic to the pro-$C^{\ast }$-algebra $\mathcal{O}_{X}$ associated to 
$X,$ when $X$ is regarded as a pro-$C^{\ast }$-correspondence.

The following result is a generalization of \cite[Theorem 6.5]{Z}. If $X$ is
a Hilbert $A-A$ pro-$C^{\ast }$-bimodule, then by $_{A}I$, we denote the
closed ideal $\overline{\text{span}}\{_{A}<x,y>:\,x,\,y\in \,X\}$ of $A$.

\begin{proposition}
\label{generalresult} Let $X$ be a Hilbert $A-A$ pro-$C^{\ast }$-bimodule.
Then $_{A}I=K_{A}(X)$, up to a pro-$C^{\ast }$-isomorphism.
\end{proposition}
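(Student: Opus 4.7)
The plan is to reduce the statement to the $C^{\ast}$-algebra case via the Arens-Michael decomposition and then re-assemble by taking inverse limits. The guiding observation is that for a Hilbert $A$-$A$ pro-$C^{\ast}$-bimodule the bimodule identity ${}_{A}\langle x,y\rangle z = x\langle y,z\rangle_{A}$ says that left multiplication by ${}_{A}\langle x,y\rangle$ is nothing other than the rank-one operator $\theta_{x,y}\in K_{A}(X)$, which is the heart of the putative isomorphism $\Phi : {}_{A}I\to K_{A}(X)$ defined on generators by ${}_{A}\langle x,y\rangle \mapsto \theta_{x,y}$.

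First I would fix $\lambda\in\Lambda$ and apply the $C^{\ast}$-level statement \cite[Theorem 6.5]{Z} to the Hilbert $A_{\lambda}$-$A_{\lambda}$ $C^{\ast}$-bimodule $X_{\lambda}$, obtaining a $C^{\ast}$-isomorphism
\[
\Phi_{\lambda}\colon {}_{A_{\lambda}}I_{\lambda}\longrightarrow K_{A_{\lambda}}(X_{\lambda}),\qquad {}_{A_{\lambda}}\langle u,v\rangle \longmapsto \theta_{u,v},
\]
where ${}_{A_{\lambda}}I_{\lambda}=\overline{\operatorname{span}}\{{}_{A_{\lambda}}\langle u,v\rangle : u,v\in X_{\lambda}\}$. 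Next I would verify that $\pi_{\lambda}^{A}({}_{A}I) = {}_{A_{\lambda}}I_{\lambda}$, using the identity $\pi_{\lambda}^{A}({}_{A}\langle x,y\rangle)={}_{A_{\lambda}}\langle \sigma_{\lambda}^{X}(x),\sigma_{\lambda}^{X}(y)\rangle$ from the Preliminaries together with the surjectivity and continuity of $\sigma_{\lambda}^{X}$; this gives ${}_{A}I = \lim\limits_{\leftarrow\lambda}{}_{A_{\lambda}}I_{\lambda}$.

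I would then show that the family $(\Phi_{\lambda})_{\lambda}$ is compatible with the bonding maps: for $\lambda\geq\mu$,
\[
\pi_{\lambda\mu}^{L_{A}(X)}\circ\Phi_{\lambda} = \Phi_{\mu}\circ \pi_{\lambda\mu}^{A}\big|_{{}_{A_{\lambda}}I_{\lambda}}.
\]
On generators this reduces to the two compatibilities $\pi_{\lambda\mu}^{L_{A}(X)}(\theta_{u,v})=\theta_{\sigma_{\lambda\mu}^{X}(u),\sigma_{\lambda\mu}^{X}(v)}$ (which is immediate from the description of $\pi_{\lambda\mu}^{L_{A}(X)}$ recalled in the Preliminaries) and the analogous identity for left inner products. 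Once coherence is in place, the inverse limit $\Phi = \lim\limits_{\leftarrow\lambda}\Phi_{\lambda}$ is a topological $\ast$-isomorphism from ${}_{A}I = \lim\limits_{\leftarrow\lambda}{}_{A_{\lambda}}I_{\lambda}$ onto $K_{A}(X)=\lim\limits_{\leftarrow\lambda}K_{A_{\lambda}}(X_{\lambda})$, which is the desired pro-$C^{\ast}$-isomorphism; it sends ${}_{A}\langle x,y\rangle$ to $\theta_{x,y}$ by construction.

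The main obstacle is the identification $\pi_{\lambda}^{A}({}_{A}I)={}_{A_{\lambda}}I_{\lambda}$, i.e.\ showing that forming the closed ideal generated by left inner products commutes with passing to the Arens-Michael quotient. The inclusion $\pi_{\lambda}^{A}({}_{A}I)\subseteq {}_{A_{\lambda}}I_{\lambda}$ is clear from the compatibility of inner products with $\pi_{\lambda}^{A}$ and $\sigma_{\lambda}^{X}$; the opposite inclusion requires that every element ${}_{A_{\lambda}}\langle u,v\rangle\in X_{\lambda}\times X_{\lambda}$ be lifted, which is where surjectivity of $\sigma_{\lambda}^{X}$ is used, and that closures are respected, which follows because $\pi_{\lambda}^{A}$ has closed range (it is a quotient pro-$C^{\ast}$-morphism onto a $C^{\ast}$-algebra). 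Once this technical point is settled, the remainder is a routine inverse limit argument that piggybacks on the already-established $C^{\ast}$-version.
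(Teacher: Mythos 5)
Your proposal is correct and follows essentially the same route as the paper: both pass to the Arens--Michael quotients, invoke the known $C^{\ast}$-level isomorphism ${}_{A_{\lambda}}I\cong K_{A_{\lambda}}(X_{\lambda})$ (the paper cites \cite[Proposition 1.10]{BMS}, and via the bimodule identity ${}_{A}\langle x,y\rangle z=x\langle y,z\rangle_{A}$ your generator formula ${}_{A_{\lambda}}\langle u,v\rangle\mapsto\theta_{u,v}$ is the same map as the paper's left-multiplication description), check coherence with the bonding maps, and take inverse limits. The only minor quibble is your justification that $\pi_{\lambda}^{A}({}_{A}I)$ is closed: the clean reason is that ${}_{A}I$, being a closed $\ast$-ideal, is itself a pro-$C^{\ast}$-algebra, so its $\lambda$-quotient is a $C^{\ast}$-algebra injecting (hence isometrically, hence with closed range) into $A_{\lambda}$.
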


\begin{proof}
Since $_{A}I$ is a closed $\ast $-ideal of $A$, it is a pro-$C^{\ast }$%
-algebra, hence we get that
\begin{eqnarray*}
_{A}I &=&\lim\limits_{\leftarrow \lambda }\overline{\pi _{\lambda
}^{A}(_{A}I)}=\lim\limits_{\leftarrow \lambda }\overline{\pi _{\lambda }^{A}%
\big(\text{span}\{\,_{A}<x,y>:x,y\,\in \,X\}\big)} \\
&=&\lim\limits_{\leftarrow \lambda }\overline{\text{span}}\{_{A_{\lambda
}}<\sigma _{\lambda }^{X}(x),\sigma _{\lambda }^{X}(y)>:\,x,y\in X\} \\
&=&\lim\limits_{\leftarrow \lambda }\ _{A_{\lambda }}I.
\end{eqnarray*}%
From \cite[Proposition 1.10]{BMS}, we have that for every $\lambda \in
\Lambda $, there exists a $C^{\ast }$-isomorphism $\psi _{\lambda }:\
_{A_{\lambda }}I\rightarrow K_{A_{\lambda }}(X_{\lambda })$ given by
\begin{equation*}
\,\psi _{\lambda }(\pi _{\lambda }^{A}\left( a\right) )(\sigma _{\lambda
}^{X}(x))=\pi _{\lambda }^{A}\left( a\right) \sigma _{\lambda }^{X}(x)
\end{equation*}%
for all $\,a\in\, _{A}I,\,x\in X$. Moreover, for every $a\in
\,_{A}I,\,x\in X,\,\lambda ,\mu \in \Lambda $ with $\lambda \geq \mu
$
\begin{eqnarray*}
((\pi _{\lambda \mu }^{L_{A}(X)}\circ \psi _{\lambda })(\pi _{\lambda
}^{A}(a)))(\sigma _{\mu }^{X}(x)) &=&\sigma _{\lambda \mu }^{X}(\psi
_{\lambda }(\pi _{\lambda }^{A}(a))\sigma _{\lambda }^{X}(x)) \\
&=&\sigma _{\lambda \mu }^{X}(\pi _{\lambda }^{A}(a)\sigma _{\lambda
}^{X}(x))=\sigma _{\mu }^{X}(ax) \\
&=&\psi _{\mu }(\pi _{\mu }^{A}(a))(\sigma _{\mu }^{X}(x)) \\
&=&(\psi _{\mu }\circ \pi _{\lambda \mu }^{A})(\pi _{\lambda
}^{A}(a))(\sigma _{\mu }^{X}(x)).
\end{eqnarray*}%
Therefore $(\psi _{\lambda })_{\lambda \in \Lambda }$ is an inverse system
of $C^{\ast }$-isomorphisms between $_{A_{\lambda }}I$ and $K_{A_{\lambda
}}(X_{\lambda })$. Hence, since $K_{A}(X)=\lim\limits_{\leftarrow \lambda
}K_{A_{\lambda }}(X_{\lambda })$, there is a unique pro-$C^{\ast }$%
-isomorphism $\psi :$ $_{A}I\rightarrow K_{A}(X)$, such that $\psi
(a)(x)=ax$ and $p_{\lambda ,L_{A}\left( X\right) }(\psi
(a))=p_{\lambda }(a)$, $\,$for all $\,\lambda \in \Lambda ,\,x\in
X,\,a\in\, _{A}I$.
\end{proof}

\begin{proposition}
\label{generalresult1} Let $X$ be a Hilbert $A-A$ pro-$C^{\ast }$-bimodule.
If $X$ is viewed as a pro-$C^{\ast }$-correspondence over $A$, then $%
\mathcal{J}_{X}=$ $_{A}I.$
\end{proposition}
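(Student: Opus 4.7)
The plan is to establish the two inclusions separately, making essential use of Proposition \ref{generalresult}, which provides the pro-$C^{\ast }$-isomorphism $\psi :{}_{A}I\rightarrow K_{A}(X)$ determined by $\psi (a)(x)=ax$ together with the seminorm identity $p_{\lambda ,L_{A}(X)}(\psi (a))=p_{\lambda }(a)$ for $a\in {}_{A}I$. I will also use that every Hilbert pro-$C^{\ast }$-bimodule is an inverse limit pro-$C^{\ast }$-correspondence (as noted in the examples following Definition 3.4), so that Lemma \ref{ideal} is applicable.

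For ${}_{A}I\subseteq \mathcal{J}_{X}$, let $a\in {}_{A}I$. Then $\varphi _{X}(a)=\psi (a)\in K_{A}(X)=\lim\limits_{\leftarrow \lambda }K_{A_{\lambda }}(X_{\lambda })$, so the first defining condition of $J_{X}^{\lambda }$ is satisfied for every $\lambda $. For the second condition, if $b\in \ker (\pi _{\lambda }^{L_{A}(X)}\circ \varphi _{X})$, multiplicativity of $\varphi _{X}$ yields $\pi _{\lambda }^{L_{A}(X)}(\varphi _{X}(ab))=0$. Since ${}_{A}I$ is an ideal, $ab\in {}_{A}I$, and the seminorm identity from Proposition \ref{generalresult} gives $p_{\lambda }(ab)=p_{\lambda ,L_{A}(X)}(\varphi _{X}(ab))=0$, so $\pi _{\lambda }^{A}(ab)=0$. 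Hence $a\in J_{X}^{\lambda }$ for each $\lambda $, whence $a\in \mathcal{J}_{X}$.

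For the reverse inclusion $\mathcal{J}_{X}\subseteq {}_{A}I$, I descend to the $C^{\ast }$-level. If $a\in \mathcal{J}_{X}$, Lemma \ref{ideal} yields $\pi _{\lambda }^{A}(a)\in \pi _{\lambda }^{A}(J_{X}^{\lambda })=J_{X_{\lambda }}$ for every $\lambda $. Each $X_{\lambda }$ is a Hilbert $A_{\lambda }$-$A_{\lambda }$ $C^{\ast }$-bimodule, and at the $C^{\ast }$-level the identity $J_{X_{\lambda }}={}_{A_{\lambda }}I$ holds; this is the classical specialization of the statement we are proving, and follows by combining the $C^{\ast }$-case of Proposition \ref{generalresult} (so that $\varphi _{X_{\lambda }}\colon {}_{A_{\lambda }}I\rightarrow K_{A_{\lambda }}(X_{\lambda })$ is an isomorphism) with the characterization $J_{X_{\lambda }}=\varphi _{X_{\lambda }}^{-1}(K_{A_{\lambda }}(X_{\lambda }))\cap (\ker \varphi _{X_{\lambda }})^{\perp }$. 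Thus $\pi _{\lambda }^{A}(a)\in {}_{A_{\lambda }}I$ for every $\lambda $; since ${}_{A}I=\lim\limits_{\leftarrow \lambda }{}_{A_{\lambda }}I$, as shown in the proof of Proposition \ref{generalresult}, we conclude $a\in {}_{A}I$.

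The main obstacle is making the $C^{\ast }$-level step $J_{X_{\lambda }}={}_{A_{\lambda }}I$ fully rigorous. The inclusion ${}_{A_{\lambda }}I\subseteq J_{X_{\lambda }}$ is obtained by mimicking the first half above. For the reverse, given $a\in J_{X_{\lambda }}$, the property $\varphi _{X_{\lambda }}(a)\in K_{A_{\lambda }}(X_{\lambda })$ combined with the $C^{\ast }$-isomorphism $\varphi _{X_{\lambda }}|_{{}_{A_{\lambda }}I}$ yields a unique $a^{\prime }\in {}_{A_{\lambda }}I$ with $\varphi _{X_{\lambda }}(a^{\prime })=\varphi _{X_{\lambda }}(a)$; then $a-a^{\prime }\in \ker \varphi _{X_{\lambda }}$, and since $a,a^{\prime }\in J_{X_{\lambda }}\subseteq (\ker \varphi _{X_{\lambda }})^{\perp }$, also $a-a^{\prime }\in (\ker \varphi _{X_{\lambda }})^{\perp }$, forcing $a=a^{\prime }\in {}_{A_{\lambda }}I$.
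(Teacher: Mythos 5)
Your proof is correct and follows essentially the same route as the paper's: both reduce the claim to the $C^{\ast}$-level equality $J_{X_{\lambda }}={}_{A_{\lambda }}I$ via Lemma \ref{ideal} and then pass to the inverse limit using ${}_{A}I=\lim\limits_{\leftarrow \lambda }{}_{A_{\lambda }}I$. The only difference is that you supply a proof of $J_{X_{\lambda }}={}_{A_{\lambda }}I$ (and a direct pro-$C^{\ast}$-level argument for ${}_{A}I\subseteq \mathcal{J}_{X}$) where the paper simply cites \cite[Lemma 2.4]{K1}.
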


\begin{proof}
For each $\lambda \in \Lambda $, we have $\pi _{\lambda }^{A}\left(
J_{X}^{\lambda }\right) =J_{X_{\lambda }}=$ $_{A_{\lambda }}I$ $=\pi
_{\lambda }^{A}$ $\left( _{A}I\right)$ (for the equality $%
J_{X_{\lambda}}=_{A_{\lambda}}I$ see \cite[Lemma 2.4]{K1}). Then
$a\in \mathcal{J}_{X}$ if and only if $\pi _{\lambda }^{A}\left(
a\right) \in \pi _{\lambda }^{A}\left( J_{X}^{\lambda }\right) =\pi
_{\lambda }^{A}$ $\left( _{A}I\right) $, for all $\lambda \in
\Lambda $, that is if and only if $a\in\, _{A}I.$
\end{proof}

Then from Proposition \ref{generalresult1} and Proposition \ref%
{generalresult}, we get the following corollary.

\begin{corollary}
\label{coincidence in the bimodule case} Let $X$ be a Hilbert $A-A$ pro-$%
C^{\ast }$-bimodule. If $X$ is viewed as a pro-$C^{\ast }$-correspondence
over $A$, then $\mathcal{J}_{X}=K_{A}(X)$, up to a pro-$C^{\ast }$%
-isomorphism.\textit{\newline
Moreover, the pro-}$C^{\ast }$-isomorphism from $\mathcal{J}_{X}$ to $%
K_{A}(X)$ is given by $\Psi :\mathcal{J}_{X}$ $\rightarrow $ $K_{A}(X),$ $%
\Psi \left( a\right) x=ax.$
\end{corollary}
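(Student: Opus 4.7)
The proof is a direct composition of the two preceding propositions, so the plan is essentially to assemble them without further work. First, I would invoke Proposition \ref{generalresult1}, which gives the equality of ideals $\mathcal{J}_{X} = {}_{A}I$ inside $A$ (no isomorphism needed, literal equality of subsets of $A$). Next, I would invoke Proposition \ref{generalresult}, which supplies a pro-$C^{\ast}$-isomorphism $\psi : {}_{A}I \to K_{A}(X)$ satisfying $\psi(a)(x) = ax$ for all $a \in {}_{A}I$ and $x \in X$.

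Composing these two facts, the map $\Psi : \mathcal{J}_{X} \to K_{A}(X)$ defined by $\Psi(a)x = ax$ is simply the restriction of $\psi$ to $\mathcal{J}_{X} = {}_{A}I$, and therefore inherits the property of being a pro-$C^{\ast}$-isomorphism. The formula $\Psi(a)x = ax$ is well-defined for $a \in \mathcal{J}_{X}$ precisely because every such $a$ lies in ${}_{A}I$, on which $\psi$ was originally defined by the same formula.

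There is essentially no obstacle here, as both nontrivial ingredients have already been established: the identification of $\mathcal{J}_{X}$ with the ideal ${}_{A}I$ at the level of $A$, and the explicit pro-$C^{\ast}$-isomorphism between ${}_{A}I$ and $K_{A}(X)$ via left multiplication. The only thing to remark is that the explicit formula $\Psi(a)x = ax$ makes sense because left multiplication by elements of the ideal ${}_{A}I$ factors through adjointable compact operators, as was verified in the proof of Proposition \ref{generalresult} through the inverse limit of the componentwise isomorphisms $\psi_{\lambda} : {}_{A_{\lambda}}I \to K_{A_{\lambda}}(X_{\lambda})$ coming from \cite[Proposition 1.10]{BMS}. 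The corollary statement is therefore best presented in one or two lines citing both propositions.
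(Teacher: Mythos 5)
Your proposal is correct and matches the paper's own argument exactly: the corollary is obtained by combining Proposition \ref{generalresult1} (the equality $\mathcal{J}_{X}={}_{A}I$ as ideals of $A$) with the explicit pro-$C^{\ast}$-isomorphism $\psi:{}_{A}I\rightarrow K_{A}(X)$, $\psi(a)(x)=ax$, constructed in Proposition \ref{generalresult}. No further work is needed.
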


\begin{proposition}
\label{correspondences} Let $\left( X,A,\varphi _{X}\right) $ be a pro-$%
C^{\ast }$-correspondence. Then the following assertions are equivalent:

\begin{enumerate}
\item[\emph{(1)}] $X$ has the structure of a Hilbert $A-A$ pro-$C^{\ast }$-
bimodule;

\item[\emph{(2)}] $\varphi _{X}|_{\mathcal{J}_{X}}$ is a pro-$C^{\ast }$%
-isomorphism onto $K_{A}(X)$ such that $p_{\lambda ,L_{A}(X)}\left( \varphi
_{X}\left( a\right) \right) =p_{\lambda }\left( a\right) $, for all $a\in 
\mathcal{J}_{X},\,\lambda \in \Lambda $.
\end{enumerate}
\end{proposition}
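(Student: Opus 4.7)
The plan is to prove each direction separately, with $(2)\Rightarrow(1)$ being the substantive direction. For $(1)\Rightarrow(2)$, I assume $X$ is a Hilbert $A$-$A$ pro-$C^\ast$-bimodule, so that the associated correspondence satisfies $\varphi_X(a)x=ax$. By Corollary \ref{coincidence in the bimodule case}, $\varphi_X|_{\mathcal{J}_X}$ coincides with the pro-$C^\ast$-isomorphism $\Psi:\mathcal{J}_X\to K_A(X)$ sending $a$ to left multiplication by $a$; the required seminorm identity $p_{\lambda,L_A(X)}(\varphi_X(a))=p_\lambda(a)$ is precisely the estimate established in the proof of Proposition \ref{generalresult}.

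For $(2)\Rightarrow(1)$, the idea is to manufacture a left $A$-valued inner product by pulling the elementary operators back through the given isomorphism. I define ${}_A\langle x,y\rangle := (\varphi_X|_{\mathcal{J}_X})^{-1}(\theta_{x,y})$, which lies in $\mathcal{J}_X\subseteq A$ because $\theta_{x,y}\in\Theta(X)\subseteq K_A(X)$. The inner-product axioms transfer from the corresponding properties of $(x,y)\mapsto\theta_{x,y}$ via the fact that $\varphi_X^{-1}$ is a $\ast$-isomorphism of pro-$C^\ast$-algebras: sesquilinearity; the symmetry $({}_A\langle x,y\rangle)^\ast={}_A\langle y,x\rangle$ from $\theta_{x,y}^\ast=\theta_{y,x}$; positivity of ${}_A\langle x,x\rangle$ from positivity of $\theta_{x,x}$ in $K_A(X)$; and definiteness via $\theta_{x,x}=0$ forcing $p_\lambda^A(x\langle x,x\rangle_A)^2=p_\lambda(\langle x,x\rangle_A)^3=0$ for every $\lambda$. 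The crucial bimodule identity ${}_A\langle x,y\rangle z=x\langle y,z\rangle_A$ is then built into the construction, since applying $\varphi_X$ to the left-hand side yields exactly $\theta_{x,y}(z)=x\langle y,z\rangle_A$ by the defining formula for $\theta_{x,y}$.

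It remains to verify the two seminorm inequalities and completeness in the definition of a Hilbert pro-$C^\ast$-bimodule. The right-module estimate $p_\lambda^A(ax)\le p_\lambda(a)p_\lambda^A(x)$ is immediate from $\varphi_X(a)\in L_A(X)$ and the continuity bound $p_{\lambda,L_A(X)}(\varphi_X(a))\le p_\lambda(a)$. For the left-module estimate the key observation is the seminorm identification ${}^Ap_\lambda(x)=p_\lambda^A(x)$: the isometry hypothesis in (2) passes to $\varphi_X^{-1}$, so that $p_\lambda({}_A\langle x,x\rangle)=p_{\lambda,L_A(X)}(\theta_{x,x})$, and at the $\lambda$-quotient level this equals $\|\theta_{\sigma_\lambda^X(x),\sigma_\lambda^X(x)}\|=\|\langle\sigma_\lambda^X(x),\sigma_\lambda^X(x)\rangle_{A_\lambda}\|$ by the standard $C^\ast$-module identity. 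Once ${}^Ap_\lambda=p_\lambda^A$ is known, the inequality ${}^Ap_\lambda(xa)\le p_\lambda(a)\,{}^Ap_\lambda(x)$ reduces to the right-module bound already proved, and completeness of $X$ with respect to $\{{}^Ap_\lambda\}$ follows from completeness with respect to $\{p_\lambda^A\}$.

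The principal obstacle is securing the seminorm identification ${}^Ap_\lambda=p_\lambda^A$, which is exactly what the isometry hypothesis in (2) is designed to supply; once this is in hand the rest of the construction is forced by the prescription $\varphi_X({}_A\langle x,y\rangle)=\theta_{x,y}$, and the remaining verifications become routine consequences of standard Hilbert module identities transferred through the pro-$C^\ast$-isomorphism $\varphi_X|_{\mathcal{J}_X}$.
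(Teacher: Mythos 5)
Your proposal is correct and follows essentially the same route as the paper: the forward direction is read off from Corollary \ref{coincidence in the bimodule case}, and for $(2)\Rightarrow(1)$ you define ${}_A\langle x,y\rangle=(\varphi_X|_{\mathcal{J}_X})^{-1}(\theta_{x,y})$ and use the isometry hypothesis to obtain ${}^Ap_\lambda(x)^2=p_{\lambda,L_A(X)}(\theta_{x,x})=p_\lambda(\langle x,x\rangle_A)=p_\lambda^A(x)^2$, which is exactly the computation the paper singles out as the only non-routine step. You supply more of the routine verifications (positivity, definiteness, the module seminorm bounds) than the paper, which dispatches them with ``it is easy to check,'' but the underlying argument is identical.
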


\begin{proof}
$\left( 1\right) \Rightarrow \left( 2\right) $ If follows from Corollary \ref%
{coincidence in the bimodule case}.

$\left( 2\right) \Rightarrow \left( 1\right) $ It is easy to check
that $X$ has the structure of a left $A$-module with $ax=\varphi
_{X}(a)(x),$ $a\in
A,x\in X$ and $_{A}\left\langle x,y\right\rangle =\left( \varphi _{X}|_{%
\mathcal{J}_{X}}\right) ^{-1}\left( \theta _{x,y}\right) ,$ $x,y\in
X$, defines a left inner product on $X$. To show that $X$ is a
Hilbert $A-A$ bimodule, it remains to prove the coincidence of the
topologies inherited on $X$\ by the two inner products. For all
$x\in X$\ and $\lambda\in\Lambda$, we have
\begin{eqnarray*}
^{A}p_{\lambda }(x)^{2} &=&p_{\lambda }(_{A}<x,x>)=p_{\lambda }((\varphi
_{X}|_{\mathcal{J}_{X}})^{-1}(\theta _{x,x})) \\
&=&p_{\lambda ,L_{A}(X)}\left( \theta _{x,x}\right) =p_{\lambda }\left(
\left\langle x,x\right\rangle _{A}\right) =p_{\lambda }^{A}(x)^{2}.
\end{eqnarray*}
\end{proof}

\begin{remark}
\emph{In case $\left( X,A,\varphi _{X}\right) $ is an inverse limit pro-$%
C^{\ast }$-correspondence and $\varphi _{X}|_{\mathcal{J}_{X}}$ is a pro-$%
C^{\ast }$-isomorphism onto $K_{A}(X)$, then $p_{\lambda
,\,L_{A}(X)}(\phi_{X}(a))=p_{\lambda }(a)$, $\,\ $for$\ $all $\,a\in \,%
\mathcal{J}_{X}$ and $\,\lambda \in \Lambda $. Indeed, since $\left(
X,A,\varphi _{X}\right) $ is an inverse limit pro-$C^{\ast }$%
-correspondence, $\varphi _{X}=\lim\limits_{\leftarrow \lambda }\varphi
_{X_{\lambda }}$, and it is easy to check that $\pi _{\lambda
}^{L_{A}(X)}\circ \varphi _{X}|_{\mathcal{J}_{X}}=\varphi _{X_{\lambda
}}|_{\left( \mathcal{J}_{X}\right) _{\lambda }}\ $for each $\lambda \in
\Lambda $. Let $\lambda \in \Lambda $. We will show that $\varphi
_{X_{\lambda }}|_{\left( \mathcal{J}_{X}\right) _{\lambda }}:\left( \mathcal{%
J}_{X}\right) _{\lambda }\rightarrow K_{A_{\lambda }}(X_{\lambda })$ is a $%
C^{\ast }$-isomorphism. Then it will follow that} 
\begin{eqnarray*}
p_{\lambda ,\,L_{A}(X)}(\varphi _{X}(a)) &=&||\pi _{\lambda
}^{L_{A}(X)}(\varphi _{X}(a))||=||\varphi _{X_{\lambda }}(\pi _{\lambda
}^{A}(a))|| \\
&=&||\pi _{\lambda }^{A}(a)||=p_{\lambda }(a)
\end{eqnarray*}%
\emph{for all $a\in \mathcal{J}_{X}$. So, let $b\in \mathcal{J}_{X}$, such
that $\varphi _{X_{\lambda }}(\pi _{\lambda }^{A}\left( b\right) )=0$. Then $%
b\in \ker (\pi _{\lambda }^{L_{A}(X)}\circ \varphi _{X})$ and therefore $%
b^{\ast }\in \ker (\pi _{\lambda }^{L_{A}(X)}\circ \varphi _{X})$. Since $%
b\in \mathcal{J}_{X}$ we have $\pi _{\lambda }^{A}(b)\pi _{\lambda
}^{A}(b^{\ast })=0$ and then $p_{\lambda }(b)^{2}=p_{\lambda }(bb^{\ast })=0.
$ Therefore, $\pi _{\lambda }^{A}(b)=0$ and thus $\varphi _{X_{\lambda
}}|_{\left( \mathcal{J}_{X}\right) _{\lambda }}$ is injective. Furthermore $%
\varphi _{X_{\lambda }}|_{\left( \mathcal{J}_{X}\right) _{\lambda }}$ is
surjective, since 
\begin{eqnarray*}
\varphi _{X_{\lambda }}\left( \left( \mathcal{J}_{X}\right) _{\lambda
}\right) &=&\varphi _{X_{\lambda }}\left( \pi _{\lambda }^{A}\left( \mathcal{%
J}_{X}\right) \right) =\pi _{\lambda }^{L_{A}(X)}\left( \varphi _{X}\left( 
\mathcal{J}_{X}\right) \right) \\
&=&\pi _{\lambda }^{L_{A}(X)}\left( K_{A}(X)\right) =K_{A_{\lambda
}}(X_{\lambda }).
\end{eqnarray*}%
}
\end{remark}

\begin{remark}
\label{1}\emph{Let $X$ be a Hilbert $A-A$ pro-$C^{\ast }$-bimodule. If $X$
is regarded as a pro-$C^{\ast }$-correspondence, then, for each $\lambda \in
\Lambda $, we have $(\mathcal{J}_{X})_{\lambda }=\pi _{\lambda }^{A}\left( 
\mathcal{J}_{X}\right) =\pi _{\lambda }^{A}\left( _{A}I\right) =\
_{A_{\lambda }}I=J_{X_{\lambda }}.$}

\emph{Let $\lambda \in \Lambda $. Since }$\left( \pi _{X_{\lambda
}},t_{X_{\lambda }}\right) $\emph{\ is an injective covariant representation
of $X_{\lambda }\ $which admits a gauge action and}%
\begin{equation*}
\omega _{\lambda }=\left( \{0\},(\mathcal{J}_{X})_{\lambda }\right) =\left(
\{0\},J_{X_{\lambda }}\right) =\omega _{\left( \pi _{X_{\lambda
}},t_{X_{\lambda }}\right) ,}
\end{equation*}%
\emph{by \cite[Theorem 7.1]{K3}, there is a unique $C^{\ast }$-isomorphism }$%
\rho _{\lambda }:\mathcal{O}_{X_{\omega _{\lambda }}}$\emph{\ }$\rightarrow 
\mathcal{O}_{X_{\lambda }}$\emph{$\emph{\ such}$\ that }$\rho _{\lambda
}\circ t_{\omega _{\lambda }}=t_{X_{\lambda }}$ \emph{\ and }$\rho _{\lambda
}\circ \pi _{\omega _{\lambda }}=\pi _{X_{\lambda }}$\emph{.}

\emph{On the other hand, by \cite[Proposition 3.7]{K1}, $\mathcal{O}%
_{X_{\lambda }}$ is canonically isomorphic to the crossed product $%
A_{\lambda }\times _{X_{\lambda }}\mathbb{Z}$ of $A_{\lambda }$ by }$%
X_{\lambda }$\emph{. Therefore, the $C^{\ast }$-algebras $\mathcal{O}%
_{X_{\omega _{\lambda }}}$ and $A_{\lambda }\times _{X_{\lambda }}\mathbb{Z}$
are canonically isomorphic.}
\end{remark}

Based on \cite[Proposition 3.8]{JZ}, Remark \ref{1} and Theorem \ref{PR} we
have the following.

\begin{proposition}
Let $X$ be a Hilbert $A-A$ pro-$C^{\ast }$-bimodule. Then the pro-$C^{\ast }$%
-algebras $\mathcal{O}_{X}$ and $A\times _{X}\mathbb{Z}$ are isomorphic,
when $X$ is regarded as a pro-$C^{\ast }$-correspondence.
\end{proposition}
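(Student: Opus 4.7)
The plan is to exhibit both pro-$C^{\ast}$-algebras as inverse limits of the same inverse system of $C^{\ast}$-algebras and then compare. First I would verify that Theorem \ref{PR} applies to the correspondence $(X,A,\varphi_X)$ coming from the bimodule structure. Every Hilbert pro-$C^{\ast}$-bimodule is an inverse limit pro-$C^{\ast}$-correspondence (the example following Definition 3.5), so Lemma \ref{steplemma1} gives positive invariance of each $\ker p_\lambda$. For negative invariance, Remark \ref{1} identifies $J_{X_\lambda}$ with ${}_{A_\lambda}I$, and the inverse-system compatibility $\pi_{\lambda\mu}^A({}_{A_\lambda}\langle x,y\rangle)={}_{A_\mu}\langle\sigma_{\lambda\mu}^X(x),\sigma_{\lambda\mu}^X(y)\rangle$ immediately yields $\pi_{\lambda\mu}^A(J_{X_\lambda})\subseteq J_{X_\mu}$, so each $\ker p_\lambda$ is invariant.

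Once the hypotheses are in place, Theorem \ref{PR} supplies $\mathcal{O}_X$ together with a pro-$C^{\ast}$-isomorphism $\mathcal{O}_X\cong\lim\limits_{\leftarrow\lambda}\mathcal{O}_{X_{\omega_\lambda}}$. By Remark \ref{1}, for each $\lambda$ there is a canonical $C^{\ast}$-isomorphism $\mathcal{O}_{X_{\omega_\lambda}}\cong\mathcal{O}_{X_\lambda}$ coming from \cite[Theorem 7.1]{K3}, followed by the classical identification $\mathcal{O}_{X_\lambda}\cong A_\lambda\times_{X_\lambda}\mathbb{Z}$ of \cite[Proposition 3.7]{K1}; call the composite $\eta_\lambda:\mathcal{O}_{X_{\omega_\lambda}}\to A_\lambda\times_{X_\lambda}\mathbb{Z}$. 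Separately, \cite[Proposition 3.8]{JZ} identifies $A\times_X\mathbb{Z}$ with $\lim\limits_{\leftarrow\lambda}(A_\lambda\times_{X_\lambda}\mathbb{Z})$.

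The remaining task, and the only place that needs real care, is to verify that the family $\{\eta_\lambda\}_{\lambda\in\Lambda}$ is compatible with the two inverse systems of connecting maps, so that it descends to an isomorphism of the inverse limits. Both connecting maps are determined by universality from the canonical covariant pair $(\pi_{X_\mu}\circ\pi_{\lambda\mu}^A,\,t_{X_\mu}\circ\sigma_{\lambda\mu}^X)$ on $(X_\lambda,A_\lambda,\varphi_{X_\lambda})$: the map $\rho_{\lambda\mu}^\omega$ of the discussion preceding Theorem \ref{PR} on the $\mathcal{O}_{X_{\omega_\lambda}}$ side, and the analogous universal $C^{\ast}$-morphism on the crossed-product side constructed in \cite{JZ}. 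Uniqueness in each universal property then forces the square $\eta_\mu\circ\rho_{\lambda\mu}^\omega=(\text{crossed-product connecting map})\circ\eta_\lambda$ to commute, so passing to the inverse limit gives the desired pro-$C^{\ast}$-isomorphism $\mathcal{O}_X\cong A\times_X\mathbb{Z}$. The main (but mild) obstacle is this universal-property bookkeeping; everything else is a direct appeal to the three results cited before the statement.
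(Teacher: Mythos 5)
Your argument is exactly the paper's: the paper's entire proof is the one-line appeal to Theorem \ref{PR}, Remark \ref{1} and \cite[Proposition 3.8]{JZ}, i.e., identifying $\mathcal{O}_{X}$ with $\lim\limits_{\leftarrow \lambda }\mathcal{O}_{X_{\omega _{\lambda }}}$, each $\mathcal{O}_{X_{\omega _{\lambda }}}$ with $A_{\lambda }\times _{X_{\lambda }}\mathbb{Z}$, and the limit of the latter with $A\times _{X}\mathbb{Z}$. Your write-up is in fact more complete than the paper's, since you explicitly check the invariance hypothesis of Theorem \ref{PR} and the compatibility of the levelwise isomorphisms with the two families of connecting maps, both of which the paper leaves to the reader.
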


\section{Pro-$C^{\ast }$-correspondences and pro-$C^{\ast }$-crossed
pro\-ducts by automorphisms}

Let $A$ be a pro-$C^{\ast }$-algebra whose topology is given by the family
of $C^{\ast }$-seminorms $\{p_{\lambda };\lambda \in \Lambda \}\ $and $%
\alpha $ a strong bounded automorphism of $A$ (that is, for each $\lambda
\in \Lambda $, there is $\mu $ $\in \Lambda \ $such that $p_{\lambda }\left(
\alpha ^{n}\left( a\right) \right) \leq p_{\mu }\left( a\right) $ for all $%
a\in A$ and for all integers $n)$. We will show that the pro-$C^{\ast }$%
-algebra $\mathcal{O}_{A}$ associated to the pro-$C^{\ast }$-correspondence $%
(A,A,\varphi _{A})\ $ (see Example \ref{automorphism}) and $A\times _{\alpha
}\mathbb{Z}$, the crossed product of $A$ by $\alpha $, are isomorphic as pro-%
$C^{\ast }$-algebras.

Indeed, if $\alpha $ is an automorphism of $A$ as above, then $\left(
A,\alpha ,\mathbb{Z}\right) $ is a pro-$C^{\ast }$-dynamical system with the
action of $\mathbb{Z}$ on $A$ given by $n\rightarrow \alpha ^{n}$,$\ $and $%
A\times _{\alpha }\mathbb{Z}$ is the universal pro-$C^{\ast }$-algebra with
respect to the nondegenerate covariant representations of$\ \left( A,\alpha ,%
\mathbb{Z}\right) \ $\cite[Definition 5.4 and Theorem 5.9]{J3}.

If $\left( u,\varphi \right) \ $ is a nondegenerate covariant representation
of $\left( A,\alpha ,\mathbb{Z}\right) $ on a pro-$C^{\ast }$-algebra $B$,
then $\left(\pi, t \right) $, where $\pi $ $=\varphi $ and $t\left( a\right)
=u_{1}^{\ast }\varphi \left( a\right) $ is a nondegenerate representation of 
$(A,A,\varphi _{A})$ on $B$. Moreover, this representation is covariant.
Indeed, since $K_{A}(A)=A$, the pro-$C^{\ast }$-morphism $\psi _{t}$ is
given by $\psi _{t}\left( a\right) =u_{1}^{\ast }\varphi \left( a\right)
u_{1}$, and then 
\begin{eqnarray*}
\psi _{t}\left( \varphi _{A}\left( a\right) \right) &=&u_{1}^{\ast }\varphi
\left( \varphi _{A}\left( a\right) \right) u_{1}=u_{1}^{\ast }\varphi \left(
\alpha \left( a\right) \right) u_{1} \\
&=&u_{1}^{\ast }u_{1}\varphi \left( a\right) u_{1}^{\ast }u_{1}=\varphi
\left( a\right) =\pi \left( a\right),
\end{eqnarray*}%
for all $a\in \mathcal{J}_{A}$.

Conversely, if $\left( \pi,t \right) $ is a nondegenerate covariant
representation of $(A,A,\varphi _{A})$ on a pro-$C^{\ast }$-algebra $B$,
then the map $u:B\rightarrow B$ defined by $u\left( t\left( a\right)
b\right) =\pi \left( a\right) b$ is a unitary operator, and $\left(
u,\varphi \right) $, where $\varphi =\pi $ and $n\rightarrow u_{n}=u^{n}$
with $u_{0}=$id$_{B}$, is a nondegenerate covariant representation of $%
\left( A,\alpha ,\mathbb{Z}\right) $ on $B$.

We remark that if $\left( \pi,t \right) $ is a covariant representation of a
nondegenerate pro-$C^{\ast }$-correspondence $\left( X,A,\varphi _{X}\right) 
$ on a pro-$C^{\ast }$-algebra $B$, then $\left( \pi,t \right) $ is a
nondegenerate covariant representation of $\left( X,A,\varphi _{X}\right) $
on the pro-$C^{\ast }$-algebra pro-$C^{\ast }$-$\{t(X),\pi \left( A\right)
\}.$

Using these facts and the universal property for crossed products of pro-$%
C^{\ast }$-algebras \cite[Corollary 5.7]{J3}, we have the following
proposition.

\begin{proposition}
Let $A$ be a pro-$C^{\ast }$-algebra, whose topology is given by the family
of $C^{\ast }$-seminorms $\{p_{\lambda };\lambda \in \Lambda \}\ $and let $%
\alpha $ be an automorphism of $A$ with the property that for each $\lambda
\in \Lambda $, there is $\mu $ $\in \Lambda \ $such that $p_{\lambda }\left(
\alpha ^{n}\left( a\right) \right) \leq p_{\mu }\left( a\right) $, for all $%
a\in A$ and for all integers $n$. Then the pro-$C^{\ast }$-algebras \ $%
\mathcal{O}_{A} $ and $A\times _{\alpha }\mathbb{Z}$ are isomorphic.
\end{proposition}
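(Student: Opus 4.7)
The plan is to construct mutually inverse pro-$C^{\ast}$-morphisms $\Phi:\mathcal{O}_A\to A\times_\alpha\mathbb{Z}$ and $\Psi:A\times_\alpha\mathbb{Z}\to\mathcal{O}_A$ by feeding the two universal properties with the representation-level dictionary already assembled in the two paragraphs immediately before the statement.

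As a preliminary step, I would confirm that $\mathcal{O}_A$ actually exists by checking the hypotheses of Theorem~\ref{PR} for $(A,A,\varphi_A)$. Strong boundedness of $\alpha$ forces $\alpha(\ker p_\lambda)\subset\ker p_\lambda$, so $X(\ker p_\lambda)\subset\ker p_\lambda$ and each $\ker p_\lambda$ is positively invariant. Using Remark~\ref{inv} one has $A^{-1}(\ker p_\mu)=\ker(\pi_\mu^A\circ\alpha)$, and the fact that $\alpha$ is an automorphism gives the negative invariance condition of Definition~\ref{deffinvariant} by a short direct check. Thus Theorem~\ref{PR} delivers the universal covariant pair $(\pi_A,t_A)$ on $\mathcal{O}_A$.

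For $\Phi$, let $(i_A,u)$ denote the canonical covariant representation of $(A,\alpha,\mathbb{Z})$ generating $A\times_\alpha\mathbb{Z}$. Setting $\pi:=i_A$ and $t(a):=u_1^{\ast}i_A(a)$, the first paragraph above the proposition identifies $(\pi,t)$ as a covariant representation of $(A,A,\varphi_A)$ on $A\times_\alpha\mathbb{Z}$, and Definition~\ref{universal} yields a unique pro-$C^{\ast}$-morphism $\Phi:\mathcal{O}_A\to A\times_\alpha\mathbb{Z}$ with $\Phi\circ\pi_A=i_A$ and $\Phi\circ t_A(a)=u_1^{\ast}i_A(a)$. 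For $\Psi$, the remark preceding the proposition upgrades $(\pi_A,t_A)$ to a nondegenerate covariant representation on the pro-$C^{\ast}$-algebra it generates, namely $\mathcal{O}_A$ itself. The second paragraph then produces a unitary $u'\in M(\mathcal{O}_A)$ satisfying $u'(t_A(a)b)=\pi_A(a)b$, so that $(u',\pi_A)$ is a nondegenerate covariant representation of $(A,\alpha,\mathbb{Z})$ on $\mathcal{O}_A$. By \cite[Corollary 5.7]{J3} this induces a unique pro-$C^{\ast}$-morphism $\Psi:A\times_\alpha\mathbb{Z}\to\mathcal{O}_A$ with $\Psi\circ i_A=\pi_A$ and $\Psi(u_1)=u'$.

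Mutual inversion is then checked on generators: $\Psi\circ\Phi$ sends $\pi_A(a)\mapsto\pi_A(a)$ and $t_A(a)=u'^{\,\ast}\pi_A(a)\mapsto u'^{\,\ast}\pi_A(a)=t_A(a)$, while $\Phi\circ\Psi$ sends $i_A(a)\mapsto i_A(a)$ and $u_1\mapsto\Phi(u')=u_1$, the last identity following from $u'\pi_A(a)b=\pi_A(a)b$ transported through $\Phi$. The main obstacle I anticipate is the careful handling of nondegeneracy together with the passage to multipliers: the formula $u'(t_A(a)b)=\pi_A(a)b$ presupposes that $t_A(A)\mathcal{O}_A$ is dense in $\mathcal{O}_A$, which needs a short argument combining the generation hypothesis with the nondegeneracy of $\varphi_A$, and one must also verify that $u'$ assembles, via the inverse-limit description, into a genuine unitary multiplier compatible with the universal property invoked from \cite[Corollary 5.7]{J3}.
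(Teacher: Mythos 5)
Your proposal follows the paper's own route: the paper's entire proof consists of the two paragraphs preceding the statement (the dictionary converting nondegenerate covariant representations of $(A,\alpha,\mathbb{Z})$ into covariant representations of $(A,A,\varphi_{A})$ via $t(a)=u_{1}^{\ast}\varphi(a)$ and back via $u(t(a)b)=\pi(a)b$) together with the appeal to the two universal properties, and you have simply made explicit the two morphisms $\Phi$, $\Psi$ and the check on generators that the paper leaves to the reader. That part is fine, including your attention to the nondegeneracy/multiplier issues, which the paper glosses over.

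The one genuine flaw is in your added preliminary step. The implication ``strong boundedness of $\alpha$ forces $\alpha(\ker p_{\lambda})\subset\ker p_{\lambda}$'' is false as stated: the condition $p_{\lambda}(\alpha^{n}(a))\leq p_{\mu}(a)$ only yields $\alpha^{n}(\ker p_{\mu})\subset\ker p_{\lambda}$, and in general $\mu\neq\lambda$. A concrete counterexample is $A=C(\mathbb{R})$ with the seminorms $p_{K}(f)=\sup_{K}|f|$ and $\alpha(f)=f\circ h$ for a homeomorphism $h$ of finite order: this $\alpha$ is strong bounded, yet $\alpha(\ker p_{K})=\ker p_{h(K)}\neq\ker p_{K}$ whenever $h(K)\neq K$. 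The standard repair is to replace the defining family by the equivalent family $\tilde{p}_{\lambda}(a)=\sup_{n\in\mathbb{Z}}p_{\lambda}(\alpha^{n}(a))$, which is a continuous $C^{\ast}$-seminorm precisely because $\alpha$ is strong bounded and which satisfies $\tilde{p}_{\lambda}\circ\alpha=\tilde{p}_{\lambda}$; with respect to this family each kernel is $\alpha$-invariant and your verification of the hypotheses of Theorem~\ref{PR} goes through. (The paper never addresses the existence of $\mathcal{O}_{A}$ at this point at all, so your instinct to check it is right; only the justification needs this re-norming.)
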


\noindent \textbf{Acknowledgements. } The authors would like to thank
Professor M. Fragoulo\-poulou of the University of Athens, for valuable
comments and suggestions during the preparation of this paper. The first
author was supported by the grant of the Romanian Ministry of Education,
CNCS - UEFISCDI, project number PN-II-ID-PCE-2012-4-0201.

\bigskip

\begin{flushleft}
Department of Mathematics, Faculty of Applied Sciences, University
Politehnica of Bucharest, 313 Spl. Independentei Street, 060042, Bucharest,
Romania and Simion Stoilow Institute of Mathematics of the Roumanian
Academy, 21 Calea Grivitei Street, 010702, Bucharest, Romania

mjoita@fmi.unibuc.ro \newline
http://sites.google.com/a/g.unibuc.ro/maria-joita/ \newline

Department of Mathematics, University of Athens, Panepistimiopolis, Athens
15784, Greece

gzarak@math.uoa.gr
\end{flushleft}

\end{document}